%%%%%%%%%%%%%%%%%%%%%%%%%%%%%%%%%%%%%%%%%%%%%%%%%%%%%%%%%%%%%%%%%%%%%%%%%%%%%%%%
%2345678901234567890123456789012345678901234567890123456789012345678901234567890
%        1         2         3         4         5         6         7         8

%\documentclass[letterpaper, 12 pt, onecolumn, draftcls]{IEEEtran}  % Comment this line out
                                                          % if you need a4paper
%\documentclass[a4paper, 10pt, conference]{ieeeconf}      % Use this line for a4
                                                          % paper
%\input{preamble}
%\IEEEoverridecommandlockouts                              % This command is only
                                                          % needed if you want to
                                                          % use the \thanks command
%\overrideIEEEmargins

\documentclass[11pt]{article}
\usepackage[margin=0.850in]{geometry}

%\usepackage{fullpage}

% See the \addtolength command later in the file to balance the column lengths
% on the last page of the document
%\usepackage{cite}

% The following packages can be found on http:\\www.ctan.org
%\usepackage{graphicx} % for pdf, bitmapped graphics files
%\usepackage{epsfig} % for postscript graphics files
%\usepackage{mathptmx} % assumes new font selection scheme installed
%\usepackage{times} % assumes new font selection scheme installed
\usepackage{amsmath} % assumes amsmath package installed
\usepackage{amssymb}  % assumes amsmath package installed
\usepackage{amsthm}
\usepackage{hyperref} % Amirali: I commented this package out because after 22 equations, it wasn't
%allowing me to make a 23rd equation, giving an error including "\atbeginshipoutbox"...
\usepackage{color}
\usepackage{graphicx}        % standard LaTeX graphics tool
\usepackage[hang,tight]{subfigure}

\usepackage{multicol}

\usepackage{algorithm}

\theoremstyle{plain}  % Bold name, italics font
\newtheorem{theorem}{Theorem}[section]

\newtheorem{definition}[theorem]{Definition}

\theoremstyle{definition}

\theoremstyle{remark} % italics name, roman font

\newtheorem{remark}{Remark}[section]

%\newenvironment{proof}{\noindent{\bf Proof}\hspace*{1em}}{\hfill\qed\vspace{0.125in}}

%%% Margins command -- to eliminate all margins, uncomment second line
%\newcommand{\mymargin}[1]{\marginpar{\color{red}\tiny\ttfamily#1}}
%\renewcommand{\mymargin}[1]{}

%%%%Remove comment to get the colors:
%\def\alex#1{{\color{red}#1}}
%\def\alexc#1{{#1}}
%\def\alexn#1{{#1}}
%
%%\def\alex#1{{#1}}
%%\def\alexc#1{{#1}}
%%\def\alexn#1{{#1}}
%
%\long\def\jnt#1{{\color{blue}#1}}
%\long\def\old#1{}
%\long\def\jntn#1{{\color{magenta}#1}}
%
%\definecolor{DarkerGreen}{RGB}{0,170,0}
%%Amirali: revision 5 in DarkerGreen.
%\long\def\aaa#1{{\color{DarkerGreen}#1}}
%%Amirali: revision 7:
%\definecolor{orange}{rgb}{1,0.5,0}
%\long\def\aaan#1{{\color{orange}#1}}
%%%%

%%%Black and white version:
%\def\alex#1{{#1}}
%\def\alexc#1{{#1}}
%\def\alexn#1{{#1}}
%
%%\def\alex#1{{#1}}
%%\def\alexc#1{{#1}}
%%\def\alexn#1{{#1}}
%
%\long\def\jnt#1{{#1}}
%\long\def\old#1{}
%\long\def\jntn#1{{#1}}
%
%\definecolor{DarkerGreen}{RGB}{0,170,0}
%%Amirali: revision 5 in DarkerGreen.
%\long\def\aaa#1{{#1}}
%%Amirali: revision 7:
%\definecolor{orange}{rgb}{1,0.5,0}
%\long\def\aaan#1{{#1}}

%Polynomial Optimization in Military Operations

\title{\LARGE \bf Some Applications of Polynomial Optimization \\
in Operations Research and Real-Time Decision Making
}
 \author{\Large Amir Ali Ahmadi\thanks{Amir Ali Ahmadi is with the Department of Operations Research and Financial Engineering at Princeton University. He has been partially supported for this work by the AFOSR Young Investigator Program Award.\newline \href{mailto:aaa@us.ibm.com}{a\_a\_a@princeton.edu}, \url{http://aaa.princeton.edu/}} and Anirudha Majumdar\thanks{Anirudha Majumdar is with the Department of Electrical Engineering and Computer Science, CSAIL, MIT.\newline   \href{mailto:anirudha@mit.edu}{anirudha@mit.edu}, \url{http://www.mit.edu/\~anirudha}} }
\begin{document}
\date{}
\maketitle

%\thispagestyle{empty}
%\pagestyle{empty}

%%%%%%%%%%%%%%%%%%%%%%%%%%%%%%%%%%%%%%%%%%%%%%%%%%%%%%%%%%%%%%%%%%%%%%%%%%%%%%%%
\begin{abstract}
We demonstrate applications of algebraic techniques that optimize and certify polynomial inequalities to problems of interest in the operations research and transportation engineering communities. Three problems are considered: (i) wireless coverage of targeted geographical regions with guaranteed signal quality and minimum transmission power, (ii) computing real-time certificates of collision avoidance for a simple model of an unmanned vehicle (UV) navigating through a cluttered environment, and (iii) designing a nonlinear hovering controller for a quadrotor UV, which has recently been used for load transportation. On our smaller-scale applications, we apply the sum of squares (SOS) relaxation and solve the underlying problems with semidefinite programming. On the larger-scale or real-time applications, we use our recently introduced ``SDSOS Optimization'' techniques which result in second order cone programs. To the best of our knowledge, this is the first study of real-time applications of sum of squares techniques in optimization and control. No knowledge in dynamics and control is assumed from the reader.
%There are several applications in operations research and in particular in military operations that can be expressed as optimization problems over the set of nonnegative polynomials. It is fair to say, however, that research on these applications has so far remained closer to theory than to practice. The reason for this is perhaps twofold: (i) optimizing over nonnegative polynomials is a fundamentally difficult problem (it is NP-hard even for degree four polynomials); more importantly, (ii) the popular approximation algorithms based on sum of squares (SOS) and semidefinite programming often struggle to scale to problem sizes of interest in applications. In this paper, we show the promise of \emph{``DSOS and SDSOS Optimization''} in moving closer to larger-scale military applications of polynomial optimization. These are alternatives to sum of squares optimization developed by the authors that are based on linear programming and second programming. Significant improvements in scalability are demonstrated on three problems of interest in combat operations: (i) optimal jamming of a wireless network, (ii) safety verification of an F/A-18 Hornet aircraft, and (iii) ****.
\end{abstract}

%\keywords{Sum of squares optimization, polynomial optimization, nonnegative polynomials, optimization over symmetric matrices, semidefinite programming, linear programming, second order cone programming.}

%%%%%%%%%%%%%%%%%%%%%%%%%%%%%%%%%%%%%%%%%%%%%%%%%%%%%%%%%%%%%%%%%%%%%%%%%%%%%%%%

\section{Introduction}
In this paper we consider applications of \emph{polynomial optimization} in the area of operations research and transportation engineering. While techniques in more established areas of optimization theory such as linear, integer, combinatorial, and dynamic programming have found wide applications in these areas \cite{bertsimas1997introduction,bertsekas1995dynamic,berman1995location,Karlof05}, the relatively newer field of polynomial optimization, which has gone through rapid advancements in recent years, may yet prove to reveal many unexplored applications. It is our aim in this paper to bring a few such applications to the attention of the operation research community and to highlight some algorithmic tools based on algebraic techniques that we believe are particularly suited for approaching problems of this sort.

%has touched fewer borders with applications in operations research. We believe that a clear exposition of the algebraic techniques from the theory of polynomial optimization could lead to an exciting set of new applications. To this end, in this paper we highlight some important theoretical and algorithmic ideas from the area that we believe have particular promise and present a few examples that are of potential interest to practitioners in operations research and transportation engineering. 

The fundamental problem underlying all of our applications is that of \emph{optimizing over nonnegative polynomials}. This is the task of finding the coefficients $c_\alpha\mathrel{\mathop:}=c_{\alpha_1,\ldots,\alpha_n}$ of some multivariate polynomial $p(x)\mathrel{\mathop:}=p(x_1,\ldots,x_n)=\sum_{\alpha} c_\alpha x^\alpha$ in order to get $p(x)\geq 0$, either globally (i.e., $\forall x\in\mathbb{R}^n$), or on certain basic semialgebraic sets. A \emph{basic semialgebraic set} is a subset of the Euclidean space defined by a finite number of polynomial (in)equalities. That is a set of the form $$\mathcal{S}\mathrel{\mathop:}=\{x\in\mathbb{R}^n|\  g_i(x)\geq 0, h_i(x)=0\},$$ where the functions $g_i,h_i$ are all multivariate polynomials. The polynomial optimization problem (POP) is itself a problem of this form. Indeed, the task of finding the minimum of a polynomial function $q$ on a basic semialgebraic set $\mathcal{S}$ is the same as that of finding the largest constant $\gamma$ such that $q(x)-\gamma$ is nonnegative on $\mathcal{S}$. There are, however, many other applications of optimization over nonnegative polynomials, some to be seen in this work.

Our paper is organized as follows. In Section \ref{sec:math.background}, we briefly review the concept of sum of squares (sos) decomposition and its relation to semidefinite programming (SDP). This is a popular approach for certifying polynomial nonnegativity. While remarkably powerful, it often faces scalability limitations on larger-scale problems. As a potential remedy, we have recently introduced \cite{Ahmadi14,dsos_ciss14} the concepts of \emph{diagonally dominant and scaled diagonally dominant sum of squares (dsos and sdsos)} decomposition, which instead of SDP result in linear programs (LP) and second order cone programs (SOCP) respectively. These concepts are also presented in Section \ref{sec:math.background}.

In Section \ref{sec:jamming}, we consider the problem of providing guaranteed wireless coverage to certain basic semialgebraic subsets of the Euclidean space with minimum transmission power. 
%
%optimal robust coverage in a wireless network where the transmitters have limits on their energy usage. 
The general problem here has been previously considered in the literature but we show that tools from polynomial optimization allow us to handle the problem in broader and arguably more realistic scenarios.
%
%is a problem area which has previously been considered in the literature, but we show the benefits that tools from polynomial optimization can bring to the problem. 
Our next two examples are related to transportation problems in operations research. In particular, in Section \ref{sec:barriers}, we consider a simple model of an unmanned aerial vehicle (UAV), which aims to fly through a cluttered environment in a collision free manner. The techniques presented in this section can also be adapted for applications to ground vehicles. We demonstrate how one can choose a control law and at the same time find a \emph{formal certificate}---an independently verifiable proof---that the resulting dynamics will guarantee no collisions with obstacles. We show that using our SOCP techniques, the underlying computational task can be carried out in the order of 20-30 milliseconds, hence making a plausibility claim about a real-time application of this approach. In Section \ref{sec:quadrotor}, we use the same technical tools to design a stabilizing controller for a quadrotor system, a device that has increasing potential for use in transportation (see Section~\ref{sec:quadrotor}). The designed controller prevents the quadrotor from losing balance when it is subject to environmental disturbance, or an external perturbation. The SDP resulting from this controller design problem is so large that it cannot be solved on our machine (3.4 GHz PC with 4 cores and 16 GB RAM). This is another example demonstrating the promise of our new sdsos machinery for handling problems of large scale.

Our second and third applications include the employment of Lyapunov techniques to convert a problem in dynamics and control to a problem in polynomial optimization. Since we do not want to assume this background from the reader, we present the essentials of these very basic concepts in Section \ref{sec:Lyap.background}. The mathematical background in this section (just like Section~\ref{sec:math.background}) is presented at a minimal level to make the paper self-contained, while keeping the focus on the applications and the algorithmic aspects. We end the paper with some brief concluding remarks in Section~\ref{sec:conclusions}.

%In general, we have made an attempt to present the mathematical background (Sections~\ref{sec:math.background} and~\ref{sec:Lyap.background}) at the minimum level required to make the paper self-contained, while keeping the focus on the applications and the algorithmic aspects.

%As a result, almost all proofs in these two sections are omitted and can be found in the references. We end the paper with some brief concluding remarks in Section~\ref{sec:conclusions}.

% Finally, some concluding remarks are presented in Section \ref{sec:conclusions}.

\section{Algebraic certificates of nonnegativity via convex optimization}\label{sec:math.background}

The task of optimizing over nonnegative polynomials or even checking nonnegativity of a given polynomial, either globally or on a basic semialgebraic set, is known to be NP-hard~\cite{nonnegativity_NP_hard}. This is true already for checking global nonnegativity of a quartic (degree-4) polynomial, or for checking nonnegativity of a quadratic polynomial on a set defined by linear inequalities. A popular relaxation scheme for this problem is through the machinery of the so-called \emph{sum of squares optimization}.

We say that a polynomial $p$ is a sum of squares (sos), if it can be written as $p=\sum_iq^2$ for some other polynomials $q_i$. Obviously, such a decomposition is a sufficient (but in general not necessary~\cite{Hilbert_1888}) condition for (global) nonnegativity of $p$. 
%We know since as far back as the work of Hilbert~\cite{Hilbert_1888} in 1888 that this condition is not necessary for nonnegativity; however, 
%not every nonnegative polynomial is a sum of squares. The modern advantage, however, is that we can check for existence of a sum of squares decomposition via semidefinite programming.
The situation where $p$ is only constrained to be nonnegative on a certain basic semialgebraic set\footnote{In this formulation, we have avoided equality constraints for simplicity. Obviously, there is no loss of generality in doing this as an equality constraint $h(x)=0$ can be imposed by the pair of inequality constraints $h(x)\geq 0, -h(x)\geq 0$.} $$\mathcal{S}\mathrel{\mathop:}=\{x\in\mathbb{R}^n| \ g_1(x)\geq 0,\ldots,g_m(x)\geq 0\}$$ can also be handled with the help of appropriate sum of squares multipliers. For example, if we succeed in finding sos polynomials $s_0,s_1,\ldots,s_m$, such that

\begin{equation} \label{eq:p=s0+sigi}
p(x)=s_0(x)+\sum_{i=1}^m s_i(x)g_i(x),
\end{equation}
then we have found a certificate of nonnegativity of $p$ on the set $\mathcal{S}$. Indeed, if we evaluate the above expression at any $x\in\mathcal{S}$, nonnegativity of the polynomials $s_0,s_1\ldots,s_m$ imply that $p(x)\geq 0$. A
Positivstellensatz theorem from real algebraic geometry due to Putinar~\cite{putinar1993positive} states that if the set $\mathcal{S}$ satisfies the so-called \emph{Archimedean} property, a property only slightly stronger than compactness\footnote{In particular, if we have as an outer estimate a ball of some radius $R$ in which our set $\mathcal{S}$ lives, then we can add a single quadratic inequality $\sum_i x_i^2\leq R$ to the description of $\mathcal{S}$ to have it satisfy the Archimedean property without changing the set.}, then every polynomial positive on $\mathcal{S}$ has a representation of the type (\ref{eq:p=s0+sigi}), for some sos polynomials $s_0,s_1,\ldots,s_m$ of high enough degree (see also~\cite{nie2007Putinarcomplexity} for degree bounds). Even with absolutely no qualifications about the set $\mathcal{S}$, there are other Positivstellensatz theorems (e.g., due to Stengle~\cite{stengle1974nullstellensatz}) that certify nonnegativity of a polynomial on a basic semialgebraic set using sos polynomials. These certificates are only slightly more complicated than (\ref{eq:p=s0+sigi}) and involve sos multipliers associated with products among polynomials $g_i$ that define $\mathcal{S}$~\cite{sdprelax}. A great reference for the interested reader is the survey paper by Laurent~\cite{Laurent_survey}.

The computational advantage of a certificate of (global or local) nonnegativity via sum of squares polynomials is that it can be automatically found by semidefinite programming. What establishes the link between sos polynomials and SDP is the following well-known theorem. Recall that a symmetric $n \times n$ matrix $A$ is \emph{positive semidefinite} (psd) if $x^TAx\geq 0, \forall x\in\mathbb{R}^n$, and that semidefinite programming is the problem of optimizing over psd matrices subject to affine inequalities on their entries~\cite{VaB:96}. We denote the positive semidefiniteness of a matrix $A$ with the standard notation $A\succeq 0$.

% there are Positivstellensatz theorems from real algebraic geometry (see Section * and *,*) that allow us to convert this problem into one of asking that certain new polynomials, depending on $p,g_1,\ldots,g_m$ are sums of squares. In either case, what enables a reformulation of the ``sos constraints'' as semidefinite programming constraints is the following well-known theorem.

\begin{theorem}[see, e.g., \cite{PhD:Parrilo},\cite{sdprelax}]
\label{thm:sos.sdp}
A multivariate polynomial $p(x)$ in $n$ variables and of degree
$2d$ is a sum~of~squares if and only if there exists a symmetric matrix $Q$ (often called the Gram matrix) such that
\begin{equation}\label{eq:p=z'Qz}
\begin{array}{rll}
p(x)&=&z^{T}Qz, \\
Q&\succeq&0,
\end{array}
\end{equation}
where $z$ is the vector of monomials of degree up to $d$
\begin{equation*}\label{eq:monomials}
z=[1,x_{1},x_{2},\ldots,x_{n},x_{1}x_{2},\ldots,x_{n}^d].
\end{equation*}
\end{theorem}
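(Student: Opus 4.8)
The plan is to prove the two implications separately, each reducing to a standard fact about positive semidefinite matrices.

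For the ``if'' direction, suppose $p(x)=z^{T}Qz$ for some symmetric $Q\succeq 0$. Since $Q$ is positive semidefinite, I would invoke its spectral factorization to write $Q=\sum_i \lambda_i v_i v_i^{T}$ with $\lambda_i\geq 0$ (equivalently $Q=V^{T}V$ for some real matrix $V$). Substituting yields $p(x)=z^{T}V^{T}Vz=\|Vz\|_2^{2}=\sum_i\bigl(\sqrt{\lambda_i}\,v_i^{T}z\bigr)^{2}$. Each $\sqrt{\lambda_i}\,v_i^{T}z$ is a polynomial (a fixed linear combination of the monomials in $z$, hence of degree at most $d$), so this exhibits $p$ as a sum of squares.

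For the ``only if'' direction, suppose $p=\sum_i q_i^{2}$. The key preliminary step --- and essentially the only place that requires an argument --- is to show that each $q_i$ has degree at most $d$, so that $q_i$ lies in the span of the entries of $z$, i.e., $q_i=c_i^{T}z$ for a constant vector $c_i$. To see this, let $d^{*}:=\max_i \deg q_i$ and suppose $d^{*}>d$. Writing each $q_i$ as the sum of its top-degree form $\hat q_i$ plus lower-order terms, the homogeneous component of $\sum_i q_i^{2}$ of degree $2d^{*}$ equals $\sum_{i:\,\deg q_i=d^{*}}\hat q_i^{2}$; since a sum of squares of nonzero real forms cannot vanish identically (evaluate at any point), this would force $\deg p=2d^{*}>2d$, a contradiction. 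Hence every $q_i=c_i^{T}z$, and setting $Q:=\sum_i c_i c_i^{T}$ gives a symmetric matrix with $z^{T}Qz=\sum_i(c_i^{T}z)^{2}=\sum_i q_i^{2}=p(x)$, while $Q\succeq 0$ because it is a sum of the rank-one psd matrices $c_i c_i^{T}$.

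I expect the main (albeit minor) obstacle to be the degree bookkeeping in the second implication: one must rule out cancellation among the high-degree terms of the $q_i$, which is exactly where the non-cancellation property of sums of squares of real polynomials is used. Everything else is a direct dictionary between the quadratic form $z^{T}Qz$ and a factorization of $Q$. It is worth remarking that the Gram matrix $Q$ is in general not unique, since the outer-product entries of $zz^{T}$ satisfy linear relations (for instance $1\cdot x_1 x_2 = x_1\cdot x_2$); this is harmless for the theorem, which only asserts existence, but it is precisely the source of freedom exploited when one searches for such a $Q$ by semidefinite programming.
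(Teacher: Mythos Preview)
Your proof is correct and follows the standard argument. Note, however, that the paper does not actually give its own proof of this theorem: it is stated with the attribution ``see, e.g., \cite{PhD:Parrilo},\cite{sdprelax}'' and no proof environment follows. Your argument via $Q=V^{T}V$ for the ``if'' direction and $Q=\sum_i c_i c_i^{T}$ for the ``only if'' direction, together with the top-degree non-cancellation argument to bound $\deg q_i\le d$, is exactly the classical proof one finds in those references, so there is nothing to contrast.
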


The search for the matrix $Q$ satisfying a positive semidefiniteness constraint, as well as linear equality constraints coming from (\ref{eq:p=z'Qz}) is a semidefinite programming problem. The size of the matrix $Q$ in this theorem is $${n+d\choose d}\times {n+d\choose d},$$
which approximately equals $n^d \times n^d$. While this number is polynomial in $n$ for fixed $d$, it can grow rather quickly even for low degree polynomials. For example, the polynomials that we will be requiring to be sos in our controller design problem for the quadrotor (Section~\ref{sec:quadrotor}) have 16 variables and degree 6 and result in a Gram matrices with about half a million decision variables. A semidefinite constraint of this size is quite expensive---for example, the SDP solvers of SeDuMi~\cite{sedumi} and MOSEK~\cite{mosek} fail to solve the quadrotor problem on our machine and quickly run out of memory.

%For example, a degree-$4$ polynomial ($d=2$) in $50$ variables has 316251 coefficients and its Gram matrix, which would need to be positive semidefinite, consists of 879801 decision variables  **[check these numbers]**. A semidefinite constraint of this size is extremely expensive, and in fact a problem with $50$ variables is far beyond the current real of possibilities in SOS optimization. In absence of problem structure, sum of squares problems involving degree $4$ or $6$ polynomials are currently limited, roughly speaking, to a handful to a dozen of variables.

\subsection{DSOS and SDSOS Optimization}
\label{sec:dsos_and_sdsos}

In order to address the problem of scalability posed by SDP, we have recently introduced \cite{Ahmadi14,dsos_ciss14} alternatives to SOS programming that lead to linear programs (LPs) and second order cone programs (SOCPs). The key insight there is to replace the condition that the Gram matrix $Q$ be positive semidefinite (psd) with stronger sufficient conditions in order to obtain inner approximations to the cone $SOS_{n,d}$ of sos polynomials in $n$ variables and of degree $d$. In particular, $Q$ will be required to be either \emph{diagonally dominant} (dd) or \emph{scaled diagonally dominant} (sdd). We recall these definitions below.

\begin{definition}
\label{def:dd}
A symmetric matrix $A$ is \emph{diagonally dominant} (dd) if $a_{ii} \geq \sum_{j \neq i} |a_{ij}|$ for all $i$.
\end{definition}

We will refer to the set of $n \times n$ dd matrices as $DD_n$.

\begin{remark}
\label{rmk:dd_lp}
It is clear from Definition \ref{def:dd} that the set $DD_n$ has a polytopic description and can thus be optimized over using LP.
\end{remark}

%\begin{definition}
%A symmetric matrix $A$ is \emph{scaled diagonally dominant} (sdd) if there exists an element-wise positive vector $y$ such that:
%$$a_{ii} y_i \geq \sum_{j \neq i} |a_{ij}| y_j, \forall i.$$
%Equivalently, $A$ is sdd if there exists a positive diagonal matrix $D$ such that $AD$ (or equivalently, $DAD$) is diagonally dominant.
%\end{definition}

\begin{definition}
\label{def:sdd}
Denote the set of $n \times n$ symmetric matrices as $S^n$. Let $M_{2 \times 2}^{ij} \in S^n$ denote the symmetric matrix with all entries zero except the elements $M_{ii}, M_{ij}, M_{ji}, M_{jj}$. Then, a symmetric matrix $A$ is \emph{scaled diagonally dominant} (sdd) if it can be expressed in the following form:
$$A = \sum_{i\neq j} M_{2 \times 2}^{ij},  \quad \begin{bmatrix} M_{ii} & M_{ij}  \\ M_{ji} & M_{jj} \end{bmatrix} \succeq 0.$$
\end{definition}

\begin{remark}
The relationship between dd and sdd matrices is made clear in \cite{Ahmadi14}. As we show there, a symmetric matrix $A$ is sdd if and only if there exists a positive diagonal matrix $D$ such that $AD$ (or equivalently, $DAD$) is diagonally dominant.
\end{remark}

The set of $n \times n$ sdd matrices will be denoted by $SDD_n$. We note that sdd matrices are sometimes referred to as \emph{generalized diagonally dominant} matrices \cite{Boman05}. 

\begin{theorem}
\label{thm:sdd_socp}
The set of matrices $SDD_n$ can be optimized over using second order cone programming.
\end{theorem}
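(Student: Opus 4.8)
The plan is to exhibit the set $SDD_n$ explicitly as the feasible region of a second order cone program, i.e., as the projection of a set cut out by a finite collection of second order cone (and linear) constraints. The starting point is Definition \ref{def:sdd}: a symmetric matrix $A$ is sdd if and only if there exist symmetric matrices $M^{ij}$, one for each pair $i\neq j$, each of which is zero outside its $2\times 2$ principal submatrix indexed by $\{i,j\}$, such that $A=\sum_{i\neq j}M^{ij}$ and each $2\times 2$ block $\begin{bmatrix} M_{ii}^{ij} & M_{ij}^{ij}\\ M_{ji}^{ij} & M_{jj}^{ij}\end{bmatrix}$ is positive semidefinite. The equality $A=\sum_{i\neq j}M^{ij}$ is a system of linear equations in the entries of $A$ and of the $M^{ij}$'s, so the only nontrivial point is to encode each $2\times 2$ psd constraint as a second order cone constraint.

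The key step is the elementary fact that a $2\times 2$ symmetric matrix $\begin{bmatrix} a & b\\ b & c\end{bmatrix}$ is positive semidefinite if and only if $a\geq 0$, $c\geq 0$, and $ac\geq b^2$. I would then observe that these conditions are equivalent to the single rotated second order cone constraint $\left\|\begin{bmatrix} 2b\\ a-c\end{bmatrix}\right\|_2 \leq a+c$ (together with the implied $a+c\geq 0$); squaring the inequality gives $4b^2+(a-c)^2\leq(a+c)^2$, i.e., $4b^2\leq 4ac$, and the constraint $a+c\ge 0$ combined with $ac\ge b^2\ge 0$ forces both $a\ge 0$ and $c\ge 0$. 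Hence each $2\times 2$ block being psd is exactly one second order cone constraint of dimension three in the entries of $M^{ij}$.

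Putting this together: to optimize a linear objective over $A\in SDD_n$ subject to affine constraints on the entries of $A$, I would introduce the auxiliary variables $\{M^{ij}\}_{i\neq j}$, impose the linear equalities $A=\sum_{i\neq j}M^{ij}$ together with the structural zero constraints on each $M^{ij}$, and impose one three-dimensional second order cone constraint per pair $i\neq j$ as above. There are $\binom{n}{2}$ such cone constraints and $O(n^2)$ variables and linear constraints, so the resulting program is a genuine SOCP of size polynomial in $n$. This shows $SDD_n$ can be optimized over using second order cone programming.

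I do not anticipate a genuine obstacle here; the statement is essentially a bookkeeping exercise once Definition \ref{def:sdd} is in hand. The only mild subtlety worth stating carefully is the direction of the argument in the $2\times 2$ characterization—namely checking that the rotated-cone inequality together with $a+c\ge 0$ really does recover the individual nonnegativity of the diagonal entries, rather than only $ac\ge b^2$—so that the SOCP description is neither too strong nor too weak. Everything else is linear algebra and the definition of the second order cone.
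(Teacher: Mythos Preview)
Your proposal is correct and follows essentially the same approach as the paper: both reduce the sdd condition, via Definition~\ref{def:sdd}, to a collection of $2\times 2$ psd constraints and then observe that each such constraint is a rotated second order cone constraint. The paper simply states this last point with a citation, whereas you write out the explicit cone $\|(2b,\,a-c)\|_2\le a+c$ and carefully verify both directions of the equivalence; your argument is more detailed but not different in substance.
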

\begin{proof}
Positive semidefiniteness of the $2 \times 2$ matrices in Definition \ref{def:sdd} is equivalent to the diagonal elements $M_{ii}, M_{jj}$, along with the determinant $M_{ii}M_{jj} - M_{ij}^2$, being nonnegative. This is a \emph{rotated quadratic cone} constraint and can be imposed using SOCP~\cite{Alizadeh03}. 
\end{proof}

%\begin{remark}
%The fact that diagonal dominance is a sufficient condition for positive semidefiniteness follows directly from Gershgorin's circle theorem. This also implies that sdd matrices are psd since the eigenvalues of $DAD$ have the same sign as those of $A$ when $D$ is a  diagonal matrix with positive entries. Hence, denoting the set of $n \times n$ symmetric positive semidefinite matrices (psd) as $S_n^+$, we have from the definitions above that:
%$$DD_n \subseteq SDD_n \subseteq S_n^+.$$
%\end{remark}

\begin{remark}
The fact that diagonal dominance is a sufficient condition for positive semidefiniteness follows directly from Gershgorin's circle theorem. The fact that sdd implies psd is immediate from Definition \ref{def:sdd} since a sdd matrix is a sum of psd matrices. Hence, denoting the set of $n \times n$ symmetric positive semidefinite matrices (psd) as $S_n^+$, we have from the definitions above that:
$$DD_n \subseteq SDD_n \subseteq S_n^+.$$
\end{remark}

%\subsubsection{DSOS and SDSOS Polynomials}
%\label{subsubsec:dsos_and_sdsos}

We now introduce some naturally motivated cones that are inner approximations of the cone of nonnegative polynomials and that lend themselves to LP and SOCP. In analogy with the representation of sos polynomials in terms of psd matrices (Theorem \ref{thm:sos.sdp}), we define the \emph{dsos} and \emph{sdsos} polynomials in terms of dd and sdd matrices respectively.

\begin{definition}[\cite{Ahmadi14,dsos_ciss14}] \label{def:dsos.sdsos.rdsos.rsdsos}
\quad
\begin{itemize}
\item A polynomial $p$ of degree $2d$ is \emph{diagonally-dominant-sum-of-squares} (dsos) if it admits a representation as $p(x)=z^T(x)Qz(x)$, where $z(x)$ is the standard monomial vector of degree $d$, and $Q$ is a dd matrix. \\
\item A polynomial $p$ of degree $2d$ is \emph{scaled-diagonally-dominant-sum-of-squares} (sdsos) if it admits a representation as $p(x)=z^T(x)Qz(x)$, where $z(x)$ is the standard monomial vector of degree $d$, and $Q$ is a sdd matrix.
\end{itemize}

We denote the set of polynomials in $n$ variables and degree $d$ that are dsos and sdsos by $DSOS_{n,d}$ and $SDSOS_{n,d}$ respectively.
\end{definition}
The following inclusion relations are straightforward: $$DSOS_{n,d}\subseteq SDSOS_{n,d}\subseteq SOS_{n,d}.$$

\begin{theorem} \ 
The set $DSOS_{n,d}$ is polyhedral and the set $SDSOS_{n,d}$ has a second order cone representation. For any fixed $d$, optimization over $DSOS_{n,d}$ (resp. $SDSOS_{n,d}$) can be done with linear programming (resp. second order cone programming), of size polynomial in $n$.
\end{theorem}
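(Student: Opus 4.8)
The plan is to read everything off directly from Definition~\ref{def:dsos.sdsos.rdsos.rsdsos}, combined with the already-established facts that $DD_N$ is a polyhedron (Remark~\ref{rmk:dd_lp}) and that $SDD_N$ admits a second order cone representation (Theorem~\ref{thm:sdd_socp}), where $N:={n+d\choose d}$ is the length of the monomial vector $z(x)$. The first step is the observation that the identity $p(x)=z^T(x)Qz(x)$ appearing in Definition~\ref{def:dsos.sdsos.rdsos.rsdsos} is, after expanding the right-hand side and matching coefficients of monomials, nothing but a finite system of \emph{linear} equations tying the coefficients $c_\alpha$ of $p$ to the entries of $Q$: indexing the entries of $Q$ by the exponent vectors appearing in $z$ (so that $z_\alpha = x^\alpha$), the coefficient of a monomial $x^\gamma$ in $z^T Q z$ equals $\sum_{\alpha+\beta=\gamma} Q_{\alpha\beta}$, so the constraint reads $\sum_{\alpha+\beta=\gamma}Q_{\alpha\beta}=c_\gamma$ for each of the finitely many $\gamma$ of degree at most $2d$. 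Hence $DSOS_{n,d}$ is precisely the set of coefficient vectors $(c_\alpha)$ for which there exists $Q\in DD_N$ satisfying this linear system, i.e.\ it is the linear image of the polyhedron $DD_N$ intersected with an affine subspace. Since linear images of polyhedra are polyhedra (Fourier--Motzkin elimination), $DSOS_{n,d}$ is polyhedral.

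For the conic statement I would argue identically, replacing $DD_N$ by $SDD_N$: the set $SDSOS_{n,d}$ is the linear image of the intersection of $SDD_N$ with an affine subspace, and by Theorem~\ref{thm:sdd_socp} (explicitly, via the $2\times2$ psd blocks $M^{ij}_{2\times 2}$ of Definition~\ref{def:sdd}, each encoded as a rotated quadratic cone constraint) $SDD_N$ is second order cone representable. Because a linear image of a second order cone representable set is again second order cone representable — one simply retains the lifting variables rather than projecting — $SDSOS_{n,d}$ inherits an SOCP description. For the optimization claims there is in fact nothing to project: to optimize a linear functional of $(c_\alpha)$ over $DSOS_{n,d}$ (resp.\ $SDSOS_{n,d}$) one solves the lifted LP (resp.\ SOCP) in the variables consisting of $(c_\alpha)$, the entries of $Q$, and the auxiliary variables used to describe $DD_N$ (resp.\ $SDD_N$).

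The last step is to check that the sizes are polynomial in $n$ for fixed $d$. We have $N={n+d\choose d}=O(n^d)$, so $Q$ has $O(n^{2d})$ entries; the number of coefficient-matching equations equals the number of monomials of degree at most $2d$, namely ${n+2d\choose 2d}=O(n^{2d})$. For the dd description, each inequality $a_{ii}\ge\sum_{j\ne i}|a_{ij}|$ is linearized with $O(N^2)$ auxiliary variables $b_{ij}$ subject to $b_{ij}\ge a_{ij}$, $b_{ij}\ge -a_{ij}$, $a_{ii}\ge\sum_{j\ne i}b_{ij}$, for a total of $O(N^2)=O(n^{2d})$ variables and constraints; for the sdd description there are $O(N^2)$ lifting variables (the entries of the $2\times2$ blocks) and $O(N^2)$ rotated quadratic cone constraints. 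All of these are polynomial in $n$, giving the stated complexity.

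I expect the only delicate points to be bookkeeping: verifying carefully that matching coefficients in a fixed monomial basis produces finitely many linear equations (immediate once $z$ is fixed), and invoking the standard structural facts that polyhedrality and second order cone representability are preserved under linear images. No idea beyond Remark~\ref{rmk:dd_lp} and Theorem~\ref{thm:sdd_socp} is needed; the proof is essentially a packaging of those two results together with the linearity of the Gram-matrix identity.
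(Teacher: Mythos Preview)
Your proposal is correct and follows essentially the same approach as the paper's proof, which simply states that the result follows directly from Remark~\ref{rmk:dd_lp} and Theorem~\ref{thm:sdd_socp}, together with the observation that the Gram matrix has size ${n+d\choose d}\times{n+d\choose d}=O(n^d)\times O(n^d)$. You have supplied the details the paper leaves implicit: the linearity of the coefficient-matching equations in $p(x)=z^T(x)Qz(x)$, the preservation of polyhedrality (resp.\ SOCP representability) under linear images, and the explicit counting of variables and constraints.
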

\begin{proof}
This follows directly from Remark \ref{rmk:dd_lp} and Theorem \ref{thm:sdd_socp}. The size of these programs is polynomial in $n$ since the size of the Gram matrix is ${n+d \choose d}\times {n+d \choose d}$, which scales as $n^d$.
\end{proof}

\begin{remark}
While here we have chosen to define the $DSOS_{n,d}$ and $SDSOS_{n,d}$ cones  directly in terms of dd and sdd matrices in order to expose their LP and SOCP characterizations, it is more natural to define them as sos polynomials of a \emph{particular form}. This alternate characterization is provided in \cite{Ahmadi14}. In particular, we have the following equivalent definitions:
\begin{itemize}
\item A polynomial $p$ is dsos if it can be written as 
$$p=\sum_i \alpha_i m_i^2 + \sum_{i,j} \beta_{ij}^+ (m_i +  m_j)^2 +  \beta_{ij}^- (m_i -  m_j)^2,$$
for some monomials $m_i, m_j$ and some constants $\alpha_i,\beta_{ij}^+,\beta_{ij}^- \geq 0$. \\

\item A polynomial $p$ is sdsos if it can be written as 
$$p=\sum_i \alpha_i m_i^2 + \sum_{i,j}  (\beta_i^+ m_i + \gamma_j^+ m_j)^2 + (\beta_i^- m_i - \gamma_j^- m_j)^2,$$
for some monomials $m_i, m_j$ and some constants $\alpha_i,\beta_i^+,\gamma_j^+,\beta_i^-,\gamma_j^- \geq 0$.
\end{itemize}
\end{remark}

We will refer to optimization problems with a linear objective posed over the cones $DSOS_{n,d}$, $SDSOS_{n,d}$, and $SOS_{n,d}$ as DSOS programs, SDSOS programs, and SOS programs respectively. In general, quality of approximation decreases, while scalability increases, as we go from SOS to SDSOS to DSOS programs. Depending on the size of the application at hand, one may choose one approach over the other. In this paper, we will be using SOS optimization (Section~\ref{sec:jamming}) and SDSOS optimization (Sections~\ref{sec:barriers} and~\ref{sec:quadrotor}) in our numerical experiments. The reader is referred to~\cite{dsos_cdc14,dsos_ciss14,Ahmadi14} for many numerical examples involving DSOS optimization. We also remark in passing that SDSOS or even DSOS programming enjoy many of the same theoretical (asymptotic) guarantees of SOS programming---results of this nature are proven in~\cite{Ahmadi14}.

We now proceed to some potential operations research applications of the tools discussed so far.

\section{Wireless coverage with minimum transmission}\label{sec:jamming} %In adversarial settings, it is of great value to degrade the communication capability of the enemy. The problem described in this section is motivated by some interesting and relatively recent work in~\cite{commander2007wireless},~\cite{Jamming_journal}, which also appears as a thesis in~\cite{Jamming_phd}. The goal is to use a number of jamming devices, which transmit electromagnetic waves, to suppress the communication network of the enemy. In the latter reference, motivation for this problem is also drawn from the fact that improvised explosive devices (IEDs) are almost always detonated by some form of radio frequency device such as cellular telephones, pagers, and garage door openers. 

In the problem considered in this section we have a number $n$ of wireless electromagnetic transmitters located at positions $(\bar{x}_i,\bar{y}_i), i=1,\ldots,n$ on the plane. Each transmitter is an omnidirectional power source, emitting waves in all directions with equal intensity. Due to the laws of electromagnetics, the energy $E_i$ propagated from each jamming device is inversely proportional to the squared distance from the device:
$$E_i(x,y)=\frac{c_i\lambda}{(x-\bar{x}_i)^2+(y-\bar{y}_i)^2},$$
where $\lambda$ is some propagation constant, set hereafter to $1$ with no loss of generality, and $c_i$ is the transmission rate of device $i$. The goal is to make sure that certain regions of the plane are guaranteed to receive a given cumulative energy level of at least $C$ units, while minimizing transmission power. These regions can for example be populated urban geographical domains where a wireless service provider would like to guarantee a certain level of signal quality.

%The coverage goal is to be achieved by the minimum transmission rate possible and we may also be subject to upper bounds on the transmission rate of each device: $c_i\leq\gamma_i$. 

%(either minimizing $\sum_{i=1}^n c_i$ or $\max_i c_i$)

The problem we describe is motivated by some interesting and relatively recent work in~\cite{commander2007wireless},~\cite{Jamming_journal} (see also the thesis~\cite{Jamming_phd}), where the motivation is instead to jam the communication network of an adversary with a wireless transmitter. We note, however, that there are a few differences between our setting and that of~\cite{commander2007wireless} and~\cite{Jamming_journal}, the main one being the assumption about the region to be covered. Reference~\cite{commander2007wireless} assumes that this region is a set of isolated points (the location of the adversary is known) and this results in a simplified problem. However, more complex objectives are considered by the authors; e.g., the goal is to make the communication graph of the enemy disconnected, or to jam a prescribed fraction of the enemy locations, or to decide which transmitters to turn off. On the opposite end, the work in~\cite{Jamming_journal} assumes absolutely nothing about the location of the adversary. As a result, the goal is to cover an entire rectangular region by a prescribed level of jamming power. 
%While this is an appropriate assumption in settings with very limited information, in general the amount of transmitted power may be much larger than it needs to be.
Our setting, by contrast, allows for the region to be covered to be the union of arbitrary basic semialgebraic sets (see, e.g., Figure~\ref{subfig:input}); this obviously enhances the modeling power. We should also comment that neither our work, nor the works in~\cite{commander2007wireless} and~\cite{Jamming_journal}, satisfactorily address the more difficult problem of optimizing over the location of the transmitters.

A formal summary of our setting is as follows. We are given as input the following quantities: $C$ (required coverage level), $\gamma_i$ (upper bounds on transmission rates), $(\bar{x}_i,\bar{y}_i), i=1,\ldots,n$ (location of our transmitters), $\mathcal{B}_j, j=1,\ldots,m$ (basic semialgebraic sets describing regions to be covered). We assume that the transmitters are outside of the location sets $\mathcal{B}_j$. The goal is to find transmission rates $c_i$ to solve the following optimization problem\footnote{An alternative reasonable objective is to minimize $\max_i c_i$. This can as easily be handled.}:

%such that we minimize either $\sum_{i=1}^n c_i$ or $\max_i c_i$ subject to the following constraints:

\begin{equation}\label{eq:wireless.coverage.problem}
\begin{array}{rlll}
\mbox{minimize} &\ &\sum_{i=1}^n c_i& \   \\
 c_i&\leq&\gamma_i, &\forall i=1,\ldots,n, \\
E(x,y)\mathrel{\mathop:}=\sum_{i=1}^n \frac{c_i}{(x-\bar{x}_i)^2+(y-\bar{y}_i)^2} &\geq& C,& \forall (x,y)\in\mathcal{B}_j, j=1,\ldots,m.
\end{array}
\end{equation}

Note that the latter constraints are requiring certain rational functions to be nonnegative on certain basic semialgebraic sets. Upon taking common denominators, we can rewrite these constraints as polynomial inequality constraints:

\begin{equation}\label{eq:after.common.denom}
\begin{array}{rr}
p(x,y)&\mathrel{\mathop:}=-C\prod_{i=1}^{n}[(x-\bar{x}_i)^2+(y-\bar{y}_i)^2]+\sum_{i=1}^n c_i\prod_{k\neq i}[(x-\bar{x}_k)^2+(y-\bar{y}_k)^2]\geq 0, \\
\ &\forall (x,y)\in\mathcal{B}_j, j=1,\ldots,m.
\end{array}
\end{equation}

Observe that the degree of the polynomial $p(x,y)$ is two times the number of transmitters. Since we are dealing with polynomial inequalities in only two variables, we have no scalability issues restraining us from applying the sos relaxation. Let each set $\mathcal{B}_j$ be defined as $$\mathcal{B}_j=\{x| \quad g_{j,1}(x,y)\geq 0,\ldots, g_{j,k_j}(x,y)\geq 0\},$$ for some bivariate polynomials $g_{j,1},\ldots, g_{j,k_j}$. The optimization problem that we will be solving is:

\begin{equation}\label{eq:wireless.sos.relaxation}
\begin{array}{rlll}
\mbox{minimize} &\ &\sum_{i=1}^n c_i& \  \\
p&=&\sigma_0+\sum_{i=1}^{k_j}\sigma_{j,k}g_{j,i}, &j=1,\ldots,m, \\
\ &\ &\sigma_0,\sigma_{j,k}& \mbox{sos},
\end{array}
\end{equation}
where $p$ is as in (\ref{eq:after.common.denom}) and $\sigma_0,\sigma_{j,k}$ are bivariate polynomials whose degree is upper bounded by some even integer $d$. Note that the above is a semidefinite programming problem (via Theorem~\ref{thm:sos.sdp}) with decision variables consisting of the scalars $c_i$ and the coefficients of the polynomials $\sigma_0,\sigma_{j,k}$. It is easy to see that for each value of the degree $d$, the optimal value of (\ref{eq:wireless.sos.relaxation}) is an upper bound on the optimal value of (\ref{eq:wireless.coverage.problem}). Moreover, since in our setting each set $\mathcal{B}_i$ satisfies the Archimedean property\footnote{Indeed each set $\mathcal{B}_i$ is compact and the entire environment can be placed in a ball of some prescribed radius $R$. This quadratic constraint can be added to the description of each $\mathcal{B}_i$ to satisfy the Archimedean property.}, Putinar's Positivstellensatz tells us that by increasing $d$, we will be able to solve (\ref{eq:wireless.coverage.problem}) to global optimality.

Let us now solve a concrete example. Our input data is demonstrated in Figure~\ref{subfig:input}. We have two transmitters, located at points $(1,1.5)$ (called transmitter 1) and $(2,1)$ (called transmitter 2) on the plane. The area to be covered is given by the five ellipsoidal regions $$\mathcal{B}_j=\{z\mathrel{\mathop:}=(x,y)^T|\ (z-z_j)^TA_j(z-z_j)\leq \alpha_j\},$$ with $A_1=\begin{bmatrix}
3 &1 \\ 1 &1 
\end{bmatrix}, A_2=\begin{bmatrix}
1 &0 \\ 0 &3 
\end{bmatrix},A_3=\begin{bmatrix}
1 &0 \\ 0 &1 
\end{bmatrix},A_4=\begin{bmatrix}
1 &-1 \\ -1 &3 
\end{bmatrix},A_5=\begin{bmatrix}
5 &0 \\ 0 &1 
\end{bmatrix}, z_1=(1.1,1.75)^T, z_2=(1.25,2)^T, z_3=(1.5,1.75)^T, z_4=(1.8,1.8)^T, z_5=(2,1.4)^T, \alpha_1=\alpha_2=\alpha_3=\alpha_4=0.1, \alpha_5=0.2.$

%\begin{figure}
%  \begin{center}
%    \includegraphics[width=.5\columnwidth]{Figures/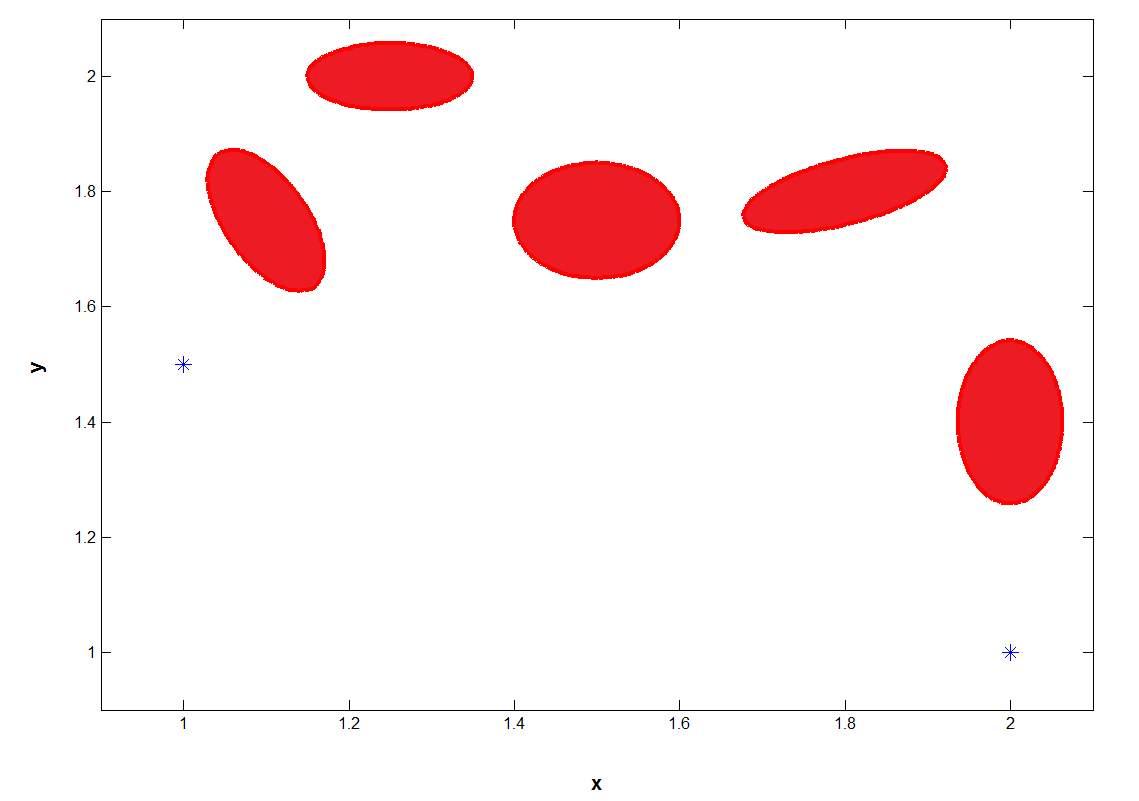}
%  \end{center}
%\end{figure}

\begin{figure}%[thpb]
\begin{center}
    \mbox{
      \subfigure[The input to the problem.]
      {\label{subfig:input}\scalebox{0.22}{\includegraphics{input.png}}}}
\mbox{
      \subfigure[Log of the cumulative energy at the SDP solution.]
      {\label{subfig:log.output}\scalebox{0.22}{\includegraphics{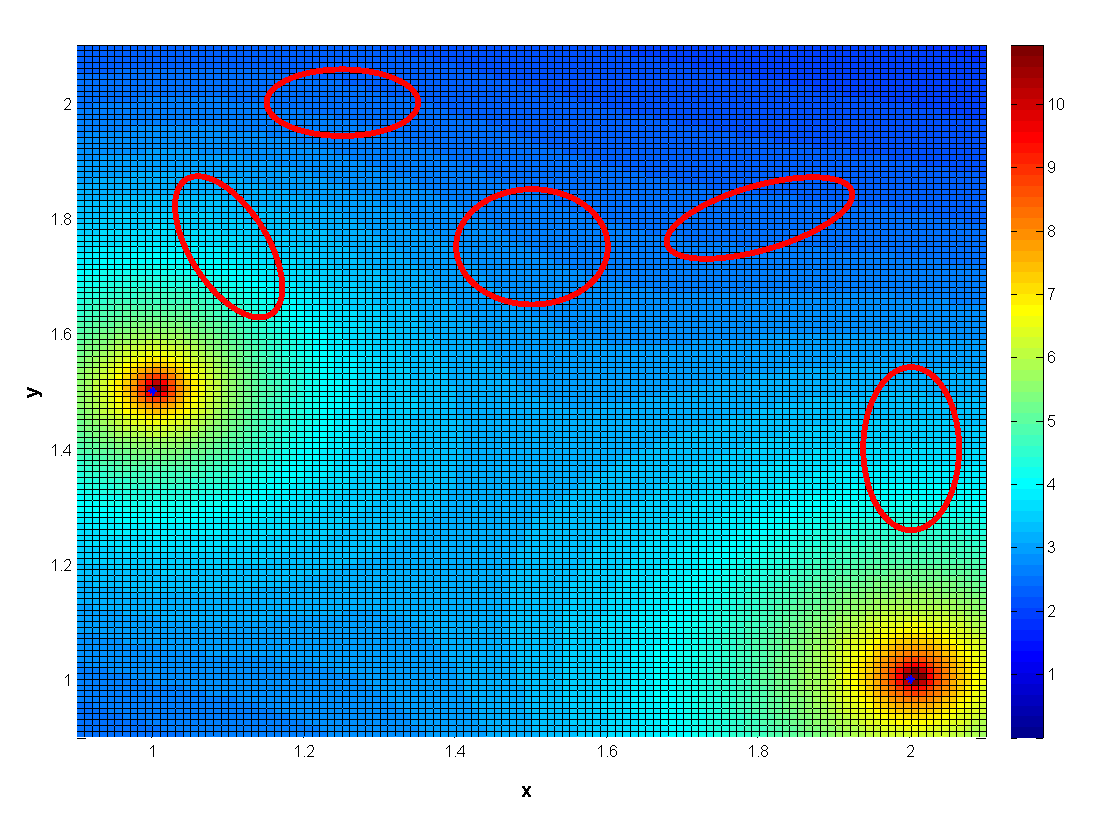}}}

      \subfigure[Region that receives a total energy of at least $C=10$ units (in dark red).]
      {\label{subfig:region.covered}\scalebox{0.22}{\includegraphics{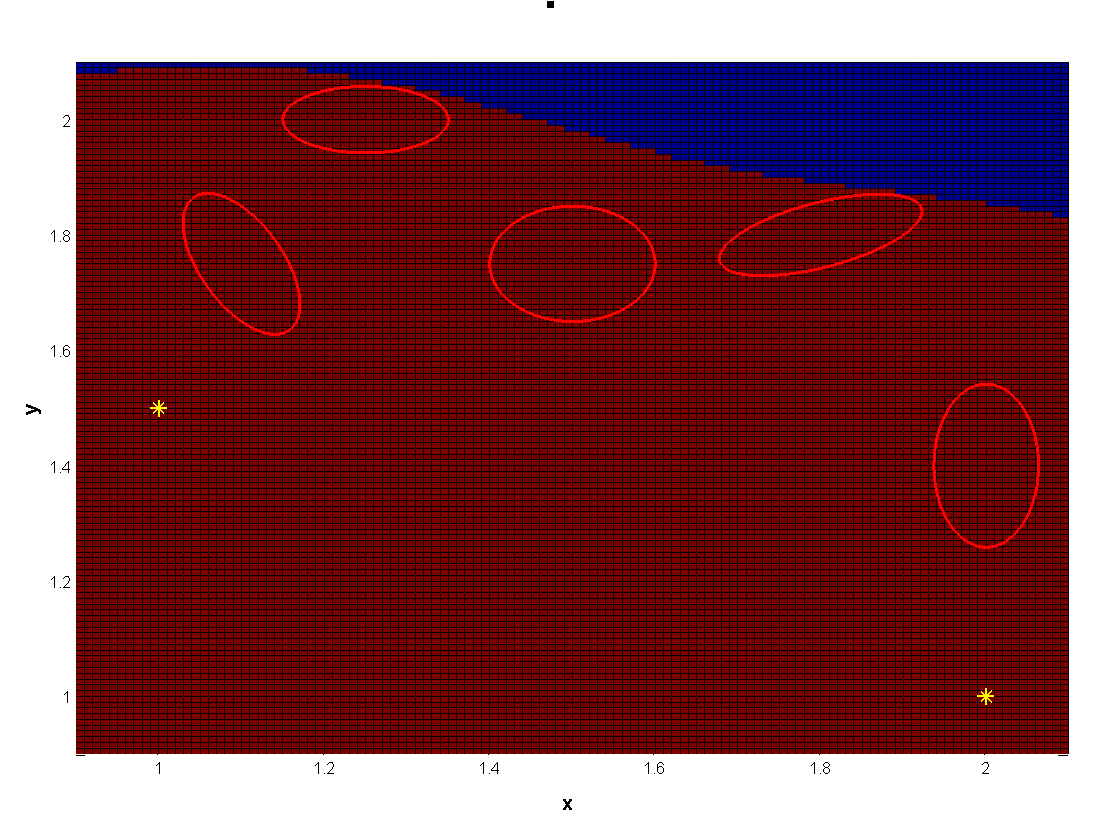}}} }

    \caption{An instance of the wireless coverage problem.}
\label{fig:robust.jamming.problem}
\end{center}
\vspace{-20pt}
\end{figure}

The required energy level on these areas is $C=10$ and the upper bounds on both transmission rates $c_1,c_2$ is $11$. We first would like to know if by only turning on one of the two transmitters we can meet the required energy level. For the transmitter at the location $(2,1)$, the optimal value of the SDP in (\ref{eq:wireless.sos.relaxation}) with degree of sos multipliers set to zero (i.e., constant multipliers) is $17.594$. In fact, in this case, we know that this upper bound is already exact! This is because in the case of one transmitter, the polynomial $p$ in (\ref{eq:after.common.denom}) is quadratic. If a quadratic polynomial is nonnegative on a region defined by another quadratic, this fact is always certified by a constant degree multiplier---this is the celebrated $\mathcal{S}$-lemma; see~\cite{S_lemma_survey}. Similarly, if we solve the problem for the transmitter located at $(1,1.5)$, the optimal value of (\ref{eq:wireless.sos.relaxation}) which matches the optimal value of (\ref{eq:wireless.coverage.problem}) is 11.446. So our task is indeed not achievable with one transmitter only.

With both transmitters on, the SDP in (\ref{eq:wireless.sos.relaxation}) is infeasible for degree-0 sos multipliers (giving an upper bound of infinity). However, when we increase the degree of these multipliers to $2$, a solution is returned with $c_1=2.561$ and $c_2=5.550$ at optimality. By further increasing the degree of our sos multipliers, no improvement in optimal value is observed and we conjecture that the numbers above are already optimal for the original problem (\ref{eq:wireless.coverage.problem}). Figure~\ref{subfig:log.output} shows the logarithm of the cumulative energy level $E(x,y)$ at each point in space. (The logarithm is taken to better observe the dispersion of energy.) Figure~\ref{subfig:region.covered} shows all pixels that receive the required energy level of $C=10$ units. As promised, all five ellipsoids are covered and interestingly the boundary of the region covered touches two of the ellipsoids.

%In its simplest form, the problem is to find the optimal location $x_i, i=1,\ldots,n$ of a set of $n$ jamming devices to disable the communication capability of some targets whose positions are believed to be among $N$ locations $\hat{x}_j, j=1:\ldots,N$. 
%The \emph{jamming effectiveness} of each jamming device, being an electromagnetic transmitter, is assumed to be inversely proportional to the squared distance from the device. For each target to be disabled, a total intensity of $C$ units needs to be received at its location from all jamming devices. This leads to the nonconvex set of constraints:
%$$\sum_{i=1}^n\frac{1}{||\hat{x}_j-x_i||^2}\geq C,\quad \forall j=1,\ldots,N.$$ Upon taking common denominators and rearranging terms, this is a polynomial optimization problem. A natural additional constraint that can be handled within POP is for the jamming devices to be more than a certain distance $r$ away from the targets, to avoid exposure. A reasonable objective may be to minimize $n$, the number jamming devices, as they may be expensive or limited. Algebraic techniques are very powerful for providing certificate of optimality here. Given a feasible solution with $k$ jamming devices, algebraic optimization algorithms can provide a short certificate showing that a solution with $k-1$ jamming devices is impossible.

\section{Lyapunov theory and optimization} \label{sec:Lyap.background}

\begin{figure}[h]
\centering
\includegraphics[width=.85\columnwidth]{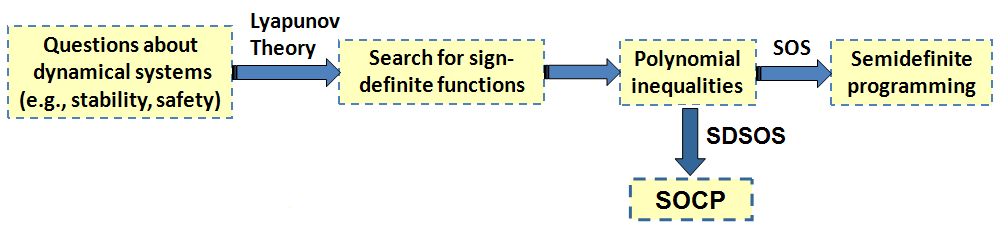}
\caption{{\small The steps involved in Lyapunov analysis of dynamical systems via convex optimization.}} \label{fig:lyap2sdp}
\end{figure}

The examples presented in our next two sections involve decision-making about trajectories of dynamical systems. The machinery that allows us to reduce such tasks to problems in optimization is \emph{Lyapunov theory}. As depicted in Figure~\ref{fig:lyap2sdp}, the general idea is the following: In order to guarantee that trajectories of dynamical systems satisfy certain desired properties, it will be enough to find certain scalar valued functions that satisfy certain inequalities. These functions will be parameterized as polynomials and DSOS/SDSOS/SOS relaxation techniques will be used to find their unknown coefficients in such a way that the desired inequalities are automatically satisfied. For our two applications, we explain next what these inequalities actually are.

%the machinery that allows for a reformulation of questions about dynamical systems as 
%optimization problems searching for functions satisfying certain inequalities is \emph{Lyapunov theory}. We briefly explain this powerful idea in the context of the two applications that we will be presenting.

%Numerous fundamental problems in nonlinear dynamics and control, such as stability, invariance, robustness, collision avoidance, controller synthesis, etc., can be turned by means of ``Lyapunov theorems'' into problems about finding special functions---the so-called \emph{Lyapunov functions}---that satisfy certain sign conditions. The task of constructing Lyapunov functions has traditionally been one of the most fundamental and challenging tasks in control. In recent years, however, advances in convex programming and in particular in the theory of sum of squares optimization have allowed for the search for Lyapuonv functions to become \emph{fully automated}. Figure~\ref{fig:lyap2sdp} summarizes the steps involved in this process.

\paragraph{Barrier functions (Section~\ref{sec:barriers}).} Consider a differential equation $\dot{x}=f(x)$, where $\dot{x}$ denotes the derivative of the state vector $x$ with respect to time and $f:\mathbb{R}^n\rightarrow\mathbb{R}^n$ is a polynomial function. Suppose we are given two basic semialgebraic sets $\mathcal{S}_{\mbox{safe}}$ and $\mathcal{S}_{\mbox{unsafe}}$ and we want to guarantee that trajectories starting in $\mathcal{S}_{\mbox{safe}}$ would never end up in $\mathcal{S}_{\mbox{unsafe}}$. This guarantee can be achieved if we succeed in finding a function $V:\mathbb{R}^n\rightarrow\mathbb{R}$, called a \emph{barrier function}~\cite{barrier_certificates_PrajnaJadbabaie},~\cite{barrier_certificates_TAC_PrajnaJadbabaie},~\cite{barry2012safety}, with the following three properties:
$$V(x)<1 \quad \forall x\in\mathcal{S}_{\mbox{safe}}, \quad V>1 \quad \forall x\in \mathcal{S}_{\mbox{unsafe}}, \quad \dot{V}(x)\leq0 \quad \forall x.$$ 
The expression $\dot{V}$ denotes the time derivative of $V$ along trajectories. If $V$ is a polynomial, $\dot{V}$ will also be a polynomial given (via the chain rule) by: $$\dot{V}(x)=\langle\nabla V(x),f(x)\rangle.$$
The three inequalities above imply that it is impossible for a trajectory to go from $\mathcal{S}_{\mbox{safe}}$ to $\mathcal{S}_{\mbox{unsafe}}$ since the function $V$ evaluated on this trajectory would need to go from a value less than one to a value more than one, but that cannot happen since the value of $V$ is non-increasing along trajectories.

\paragraph{Stability and region of attraction computation (Section~\ref{sec:quadrotor}).} Suppose once again that we have a differential equation $\dot{x}=f(x)$ with origin as an equilibrium point (i.e., satisfying $f(0)=0$). In numerous applications in control and robotics, one would like to make sure that deviations from an equilibrium point tend back to the equilibrium point. This is the notion of asymptotic stability. A particularly important problem in this area is the so-called ``region of attraction (ROA) problem'': For what set of initial conditions in $\mathbb{R}^n$ do trajectories flow to the origin? This question can be addressed with Lyapunov theory. In fact, Lyapunov's stability theorem (see, e.g.,~\cite[Chap. 4]{Khalil:3rd.Ed}) tells us that if we can find a (Lyapunov) function $V:\mathbb{R}^n\rightarrow\mathbb{R}$, which together with its gradient $\nabla V$ satisfies

\begin{equation}\label{eq:poly_Lyap_inequalities}
V(x)>0\quad \forall x\neq0,  \quad \quad \mbox{and} \quad \quad \dot{V}(x)=\langle\nabla V(x),f(x)\rangle<0\quad \forall x\in\{x|\ V(x)\leq\beta, x\neq 0\},
\end{equation}
then the sublevel set $\{x|\ V(x)\leq\beta\}$ is part of the region of attraction. Notice again that if $f$ is a polynomial function (an immensely important case in applications~\cite[Chap. 4]{AAA_PhD}), and if we parameterize $V$ as a polynomial function, then the search for the coefficients of $V$ satisfying the conditions in (\ref{eq:poly_Lyap_inequalities}) is an optimization problem over the set of nonnegative polynomials.

\section{Real-time Planning with Barrier Functions}
\label{sec:barriers}

\begin{figure}[H]
\centering
\includegraphics[trim = 0mm 0mm 0mm 0mm, clip, width=.12\textwidth]{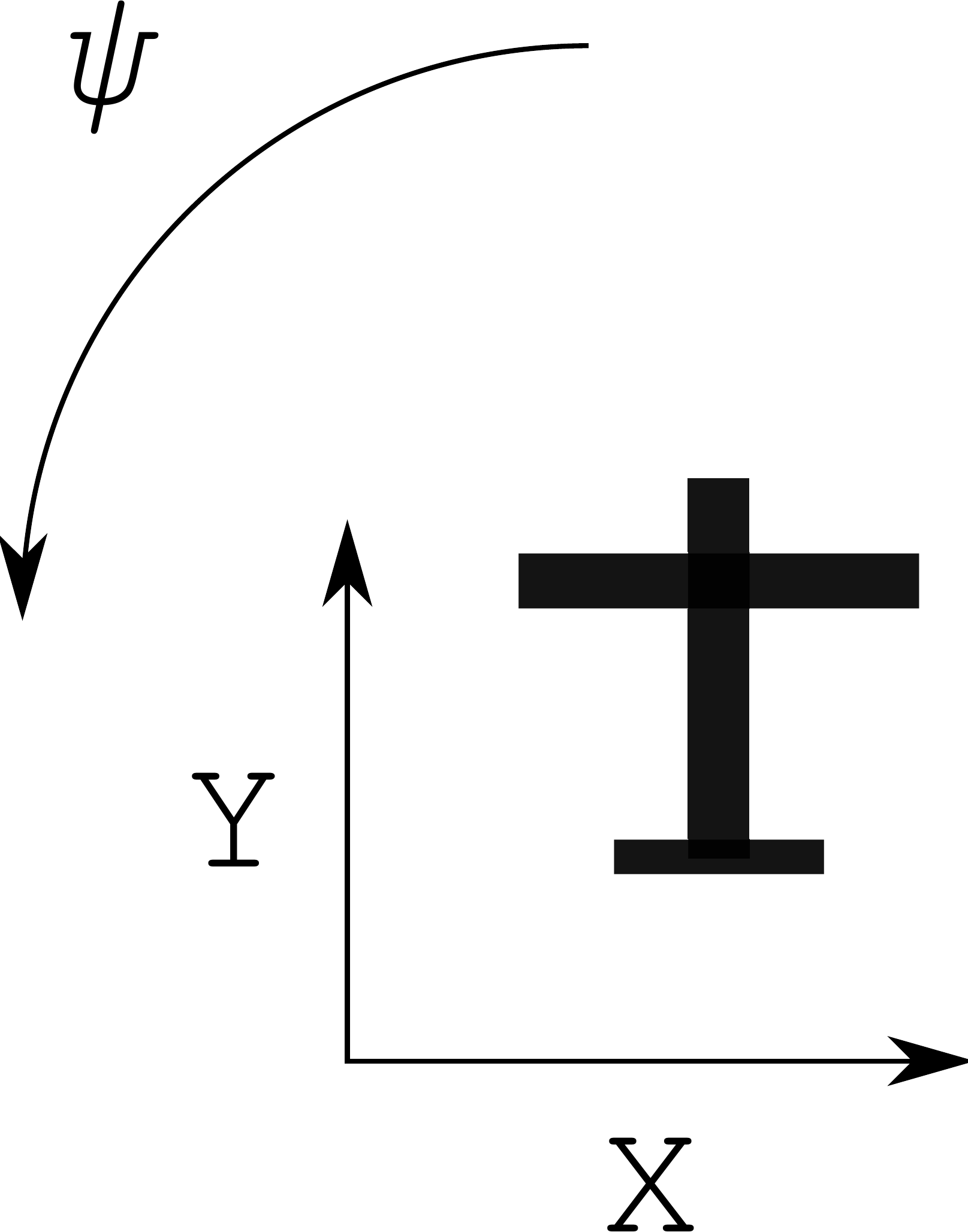}
\caption{An illustration of the states of the UAV model we consider.}
\label{fig:airplane_model}
\vspace{-10pt}
\end{figure}

One promising application domain for polynomial optimization in transportation is for real-time planning and control on autonomous vehicles. In this example, we consider such an application for a simple model of an unmanned aerial vehicle (UAV) navigating through a cluttered two dimensional environment. In order to make the navigation task more realistic, we also consider a bounded but uncertain ``cross-wind'' term in the dynamics. This results in an uncertain differential equation and requires reasoning about \emph{families} of trajectories that the system could end up following, making the problem more challenging. The states and dynamics of the UAV are inspired by the widely-used Dubins car model \cite{Dubins57} and are given by:

\begin{equation}
\bf{x} = \left[ \begin{array}{c}
x \\
y \\
\psi \end{array} \right],
\qquad
\dot{\bf{x}} = f({\bf{x}},u,w) = \left[ \begin{array}{c}
\dot x \\
\dot y \\
\dot \psi \end{array} \right] = \left[ \begin{array}{c}
-v \sin \psi + w \\
v \cos \psi \\
u \end{array} \right],
\label{eq:airplane_dynamics}
\end{equation}
where $x$ and $y$ are the x and y positions of the UAV in the environment, $v = 1$ m/s is the speed of the airplane, $\psi$ is the yaw angle, $u$ is the control input and $w$ is the ``cross-wind''  (bounded between $[-0.05, 0.05]$).  An illustration of the states of the model are given in Figure \ref{fig:airplane_model}. We Taylor expand these dynamics to degree $3$ to obtain polynomial dynamics in order to use DSOS/SDSOS/SOS programming.

\begin{figure}[H]
\centering
\includegraphics[trim =5mm 45mm 0mm 50mm, clip, width=.25\textwidth]{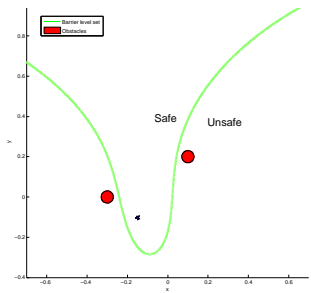}
\caption{A barrier function computed for a particular initial state and obstacle configuration. The UAV is guaranteed to remain safe when the controller is executed despite the effects of the cross-wind. The green curve is a level set of a degree-4 polynomial found by SDSOS optimization.}
\label{fig:barrier_plot}
\vspace{-20pt}
\end{figure}

\begin{figure}[H]
\centering
\subfigure[Environment 1]{%
\includegraphics[trim = 180mm 30mm 160mm 20mm, clip, width=.25\textwidth]{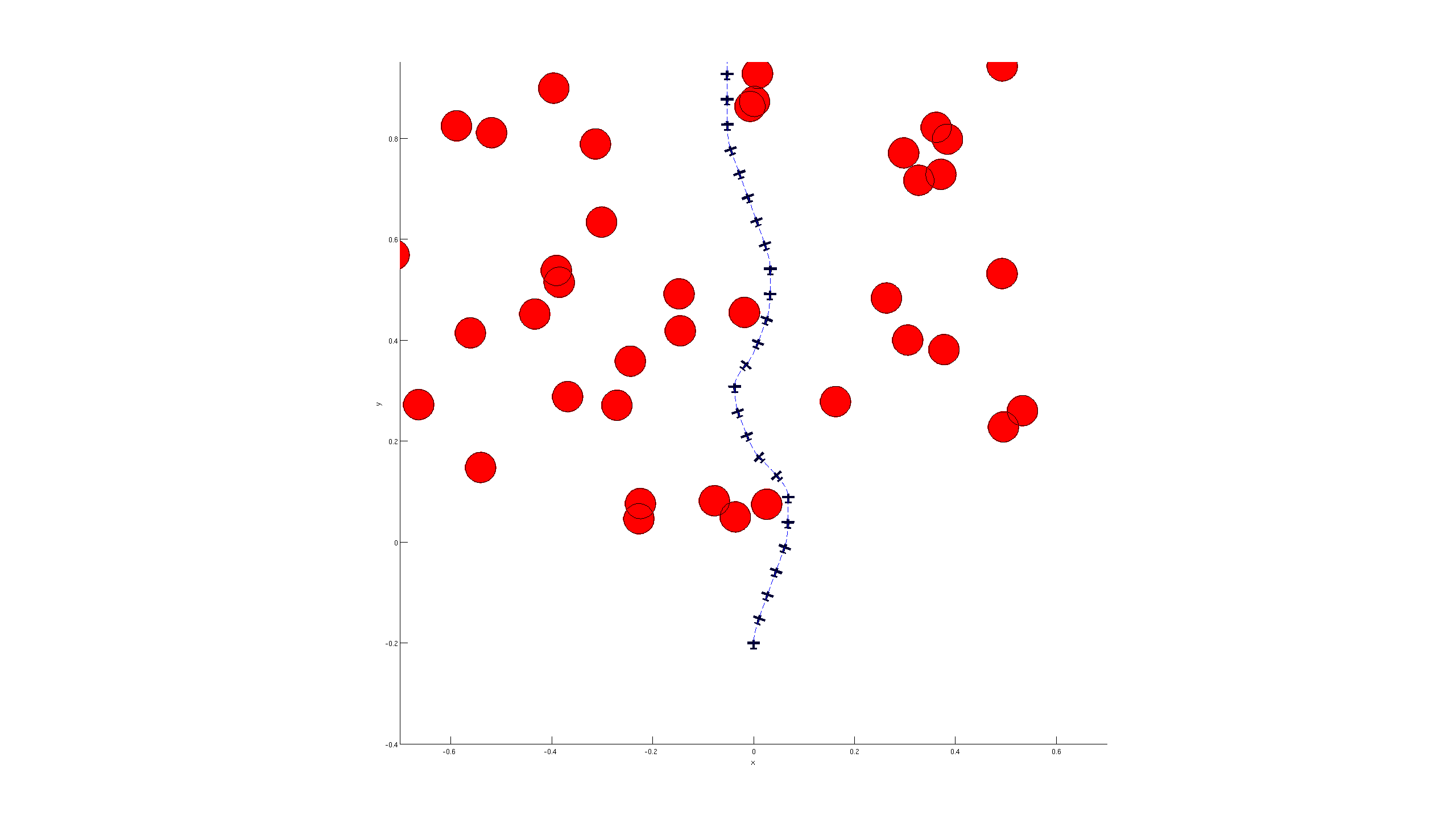}
\label{fig:env1}}
\hfill
\subfigure[Environment 2]{%
\includegraphics[trim = 180mm 30mm 160mm 20mm, clip, width=.25\textwidth]{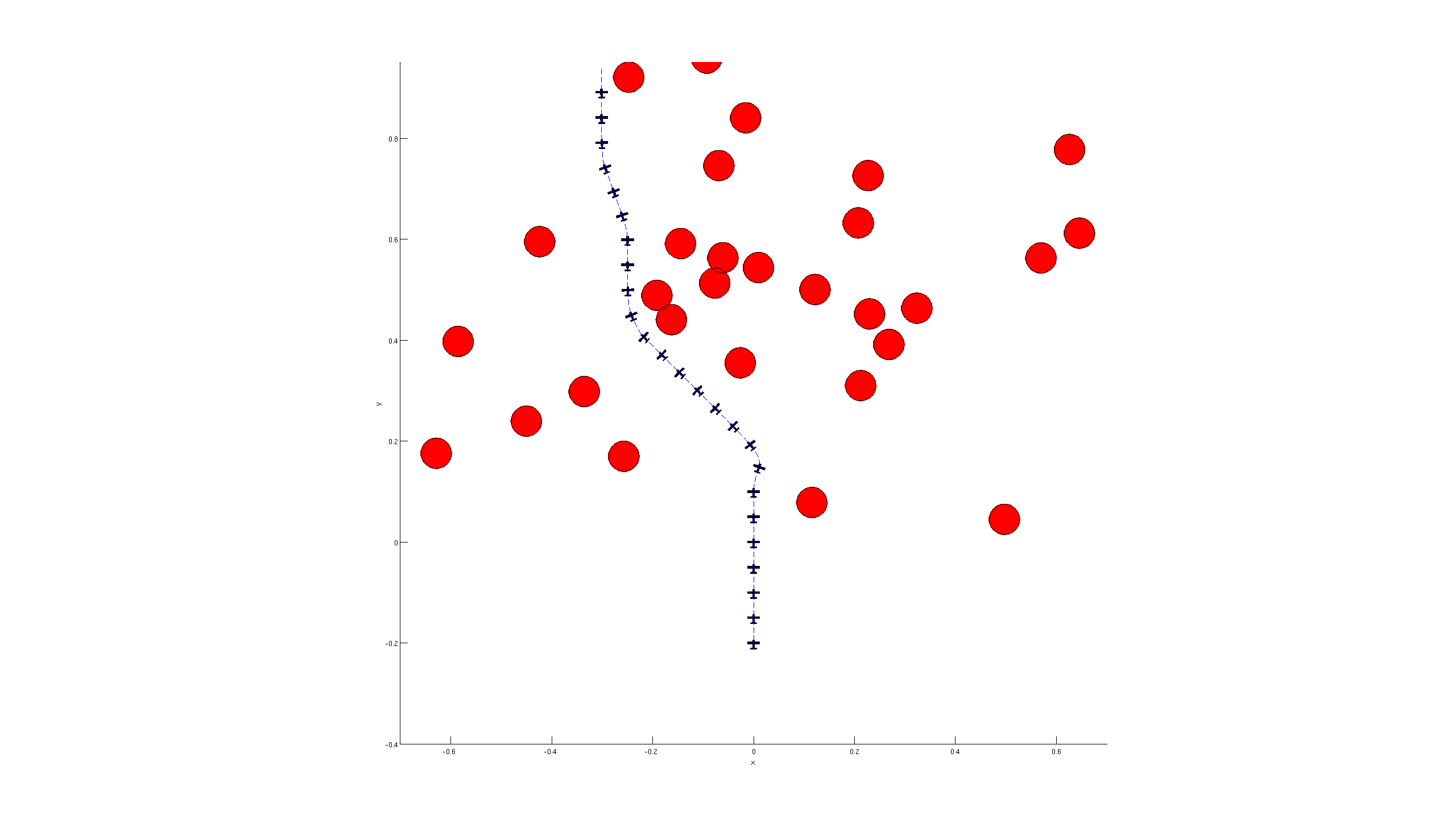}
\label{fig:env2}}
\hfill
\subfigure[Environment 3]{%
\includegraphics[trim = 180mm 30mm 160mm 20mm, clip, width=.25\textwidth]{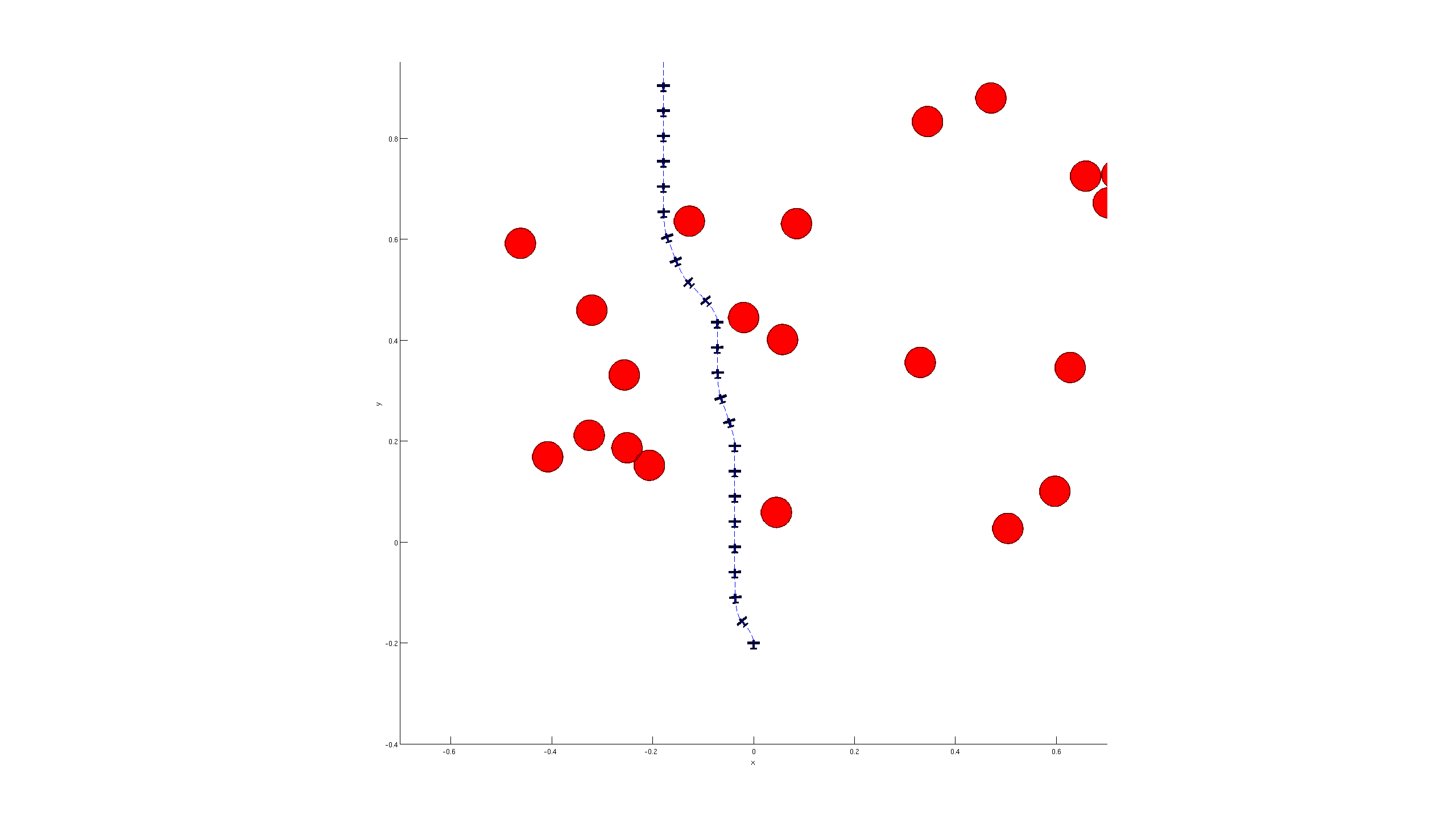}
\label{fig:env3}}
\hfill
\subfigure[Environment 4]{%
\includegraphics[trim = 180mm 30mm 160mm 20mm, clip, width=.25\textwidth]{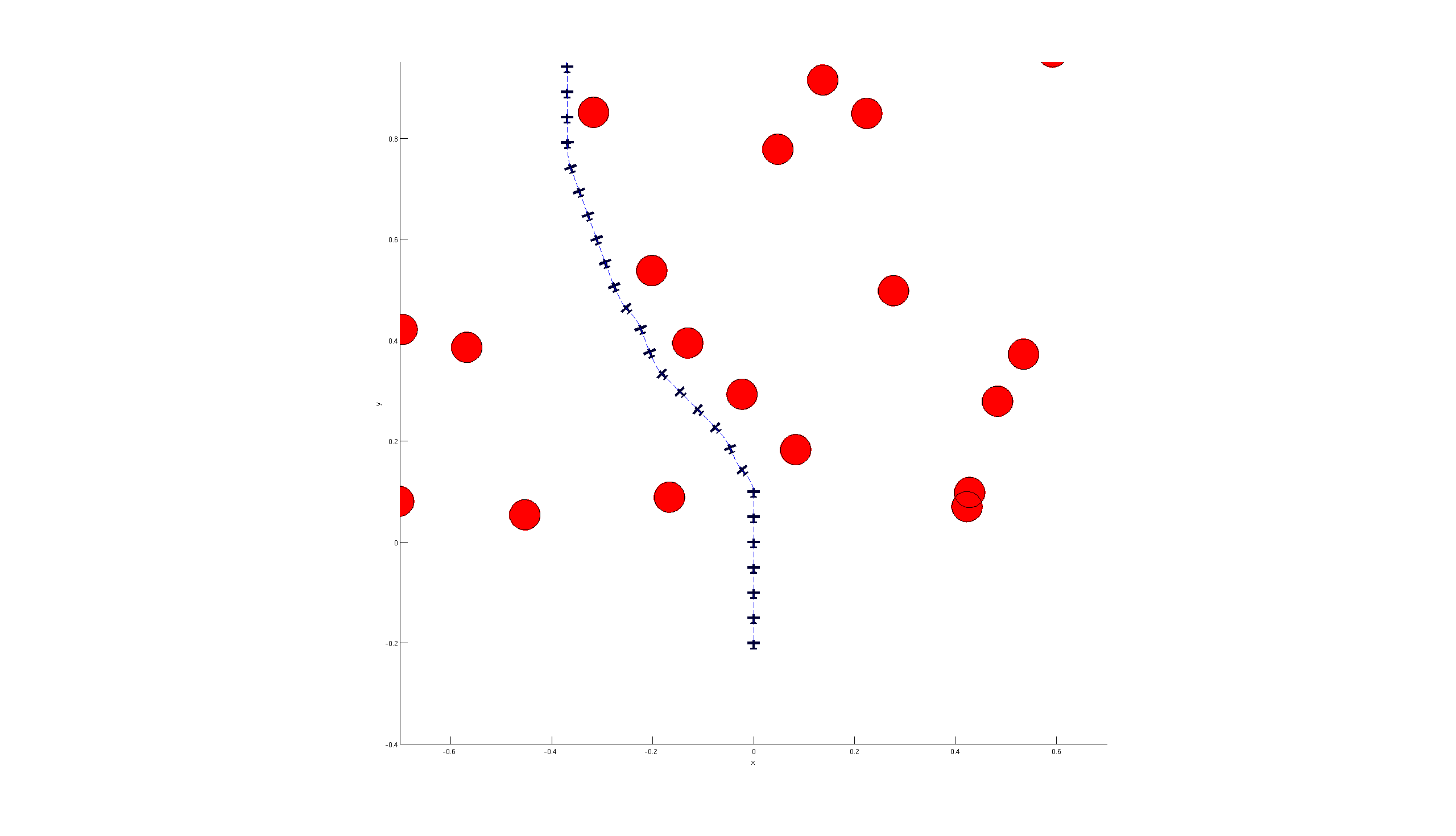}
\label{fig:env4}}
\hfill
\subfigure[Environment 5]{%
\includegraphics[trim = 180mm 30mm 160mm 20mm, clip, width=.25\textwidth]{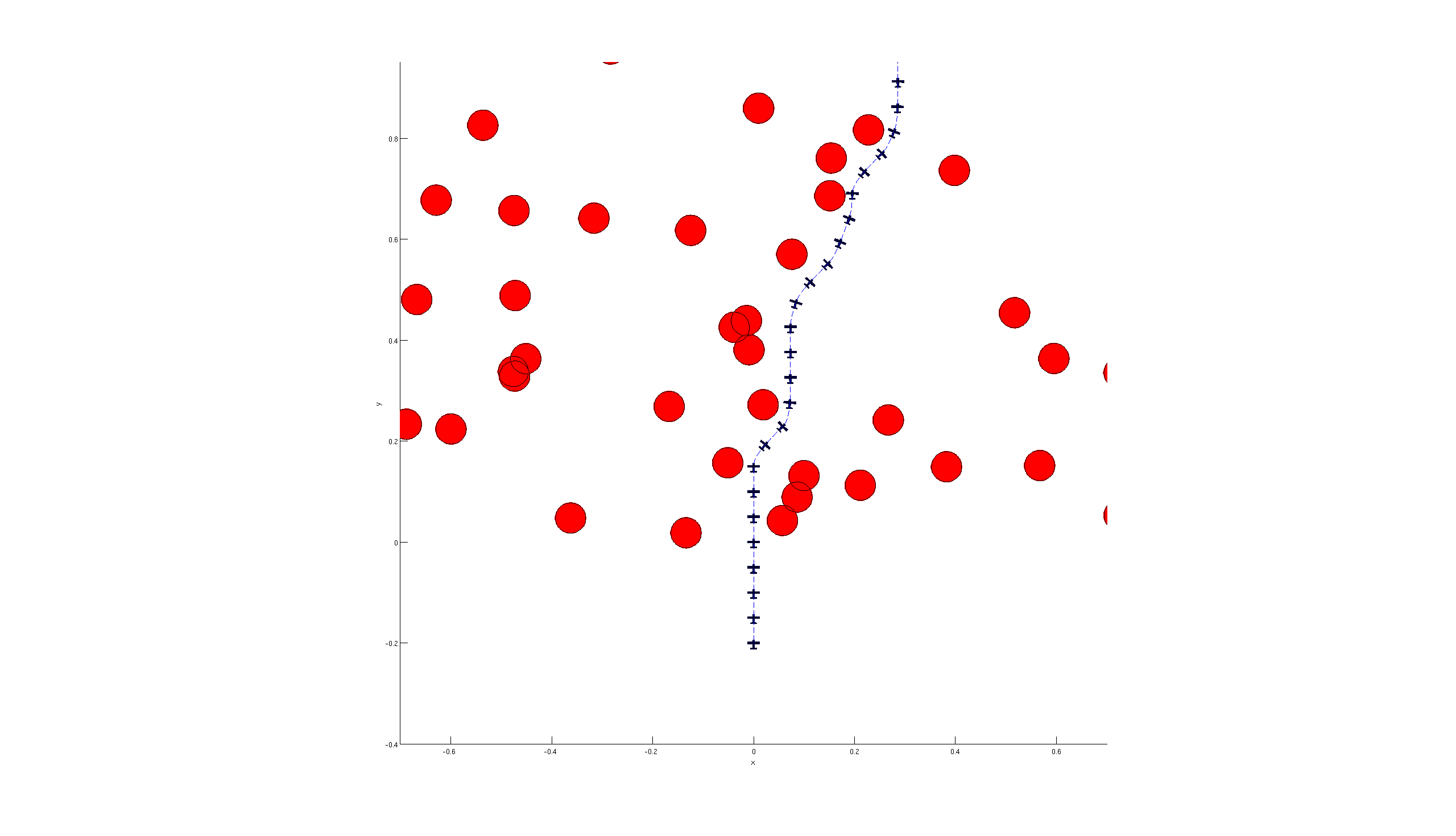}
\label{fig:env5}}
\hfill
\subfigure[Environment 6]{%
\includegraphics[trim = 180mm 30mm 160mm 20mm, clip, width=.25\textwidth]{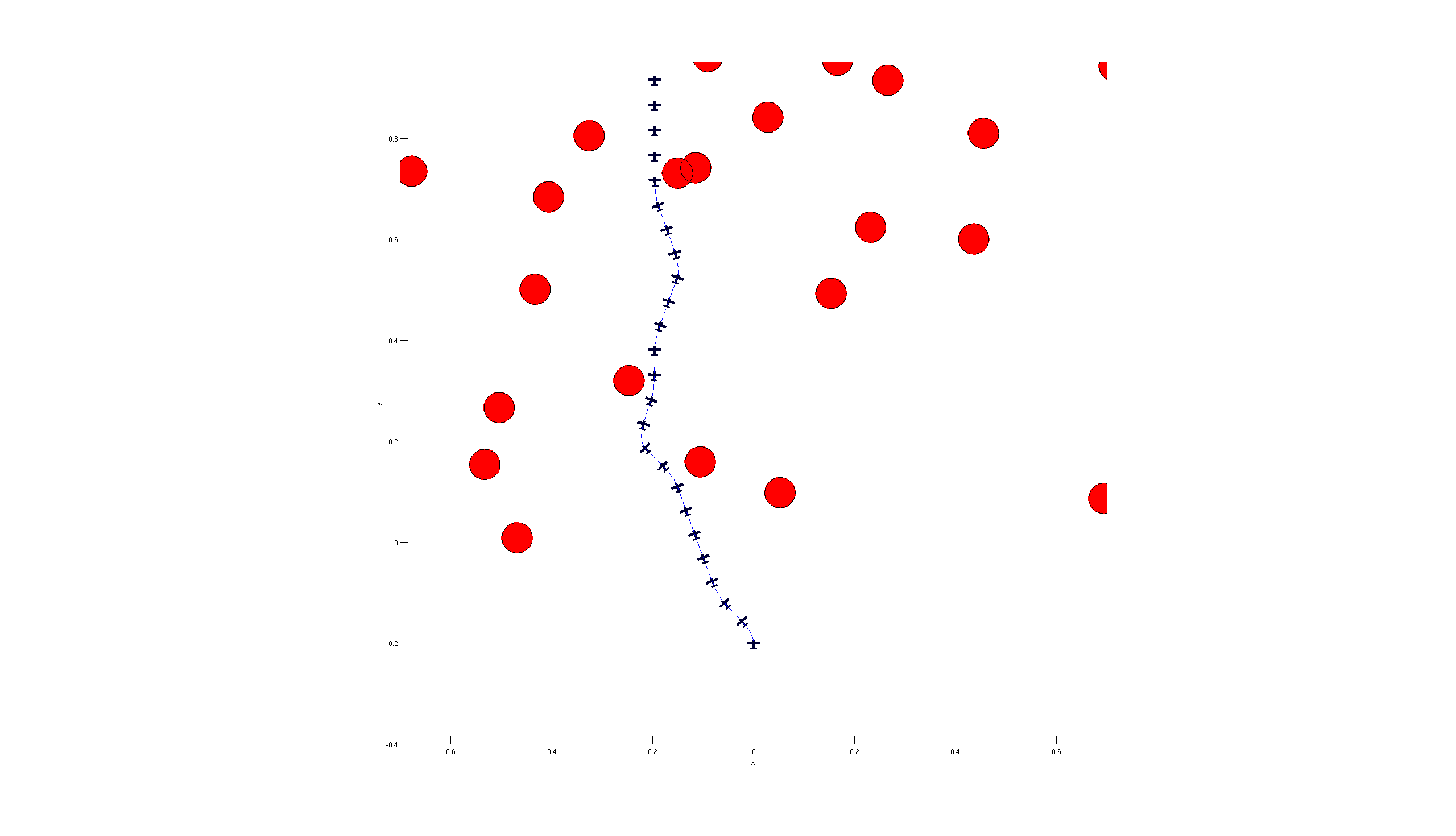}
\label{fig:env6}}
% \hfill
\caption{UAV successfully navigating through different obstacle environments using the planning algorithm described in Section \ref{sec:barriers}. A video of the navigation can be found at \href{http://youtu.be/J3a6v0tlsD4}{http://youtu.be/J3a6v0tlsD4}. This video also shows the barrier certificates (not shown here) as they get updated in real time.}
\label{fig:uav_envs}
\vspace{-10pt}
\end{figure}

Our goal is to make the UAV navigate through cluttered environments that are \emph{unknown pre-runtime}  while avoiding collisions with the obstacles in the environment \emph{despite} the effects of the cross-wind on the vehicle dynamics. In order to achieve this, we pre-compute five control primitives that the UAV can choose from at runtime. These controllers take the form:
\begin{equation}
u_i({\bf{x}}) = -K (\psi - \psi_{des,i}), \ i = 1,\dots,5.
\end{equation} 
These control primitives cause the UAV to control its yaw angle to a particular angle ($\psi_{des,i}$). We choose $K = 50$ and $\psi_{des,1} = 0$ rad, $\psi_{des,2} = -20\pi / 180$ rad, $\psi_{des,3} = 20\pi / 180$ rad, $\psi_{des,4} = -45\pi / 180$ rad, $\psi_{des,5} = 45\pi / 180$ rad.

The UAV's task is to choose from these control primitives in order to navigate its way through the environment. After executing a particular chosen primitive for a short interval of time ($1/20$ seconds in our case), the UAV replans by choosing a control primitive again. Hence, the key decision that our planning algorithm needs to make is the choice of control primitive given a particular configuration of obstacles in its environment. We take our inspiration from \cite{barry2012safety}, which uses \emph{barrier certificates} for verifying the safety of a controller given a particular set of obstacles (but does not consider the case where obstacle positions are not known beforehand and decisions must be made in real time). Similarly, other previous SOS programming approaches~\cite{majumdar2013robust} to collision avoidance have involved solving SOS programs offline and then using these precomputed results to do planning in real time. In contrast, in our example here, the optimization problems are solved in real-time. We describe our approach below.

At every control iteration, we identify the closest two obstacles in the environment in front of the UAV. We then evaluate each control primitive $u_i$ and check if executing it from our current state will result in the UAV avoiding collision with the obstacles. The first safe controller found is executed. The safety of a controller can be checked by computing barrier functions using the polynomial optimization approaches described in Section \ref{sec:math.background}. Denoting the current state as ${\bf{x}_0} = (x_0,y_0,\psi_0)$ and the obstacle sets as $X_{obs,1} \subset \mathbb{R}^2$ and $X_{obs,2} \subset \mathbb{R}^2$, we use polynomial optimization to search for a function $V({\bf{x}})$ of degree $4$ that satisfies the following conditions:
\begin{flalign}
& V({\bf{x}}_0) = 0, \\
& V({\bf{x}}) > 1, \ \forall (x,y) \in X_{obs,i}, \  i = 1,2, \\
& \dot{V}({\bf{x}},w) = \frac{\partial V}{\partial {\bf{x}}}f({\bf{x}}, u_i({\bf{x}}),w)  < 0, \ \forall {\bf{x}} \in X, \ \forall w \in [-0.05,0.05].
\end{flalign}
Here, $X$ is a ``large" set that the system is guaranteed to remain within for the duration of time for which the control primitive is executed. In particular, we choose it to be the unit sphere around the current state. The conditions above imply that the state ${\bf{x}}$ is constrained to evolve within the $1$-sublevel set (in fact the $0$-sublevel set) of the function $V({\bf{x}})$ and is thus \emph{guaranteed} to not collide with the obstacles despite the effects of the cross-wind. 

Hence, at each control iteration we need to solve a maximum of $5$ optimization problems, all of which are independent and can be parallelized. In our example, we use SDSOS programming to compute barrier functions and observe running times of approximately $0.02-0.03$ seconds for feasible problems and $0.08-0.09$ seconds for infeasible problems (i.e., problems where no barrier function can be found) using the Gurobi SOCP solver~\cite{gurobi} (a more thorough running time analysis is presented later). Hence, a real-time implementation of this approach on a hardware platform is plausible. Such a hardware implementation can benefit from already-existing SOCP solvers that are specifically designed to run on embedded systems~\cite{cvxgen},~\cite{boyd_real_time_convex_signal_processing}. In particular, \cite{ecos} presents an approach for generating stand-alone C code for an SOCP solver that can run very efficiently and with low memory footprint. The use of such real-time SOCP solvers has already been considered for tasks such as landing of spacecraft (e.g., for NASA's Mars exploration project)~\cite{Mars_SOCP}.

%These advances in real-time SOCP solvers have already led to hardware implementations, e.g., for attitude control for a spacecraft \cite{Mars_SOCP}.

A particular example of a barrier function computed for the controller $u_1$ is shown in Figure \ref{fig:barrier_plot}. The obstacles are shown in red and the initial state of the UAV is also plotted. The 1-level set of the computed barrier is plotted in green and certifies that the initial state is \emph{guaranteed} to remain safe when the controller is executed.

Figure \ref{fig:uav_envs} demonstrates the performance of the algorithm described above with SDSOS programming used to compute barrier functions on a number of environments. Each subfigure shows a randomly chosen environment (with obstacle positions chosen from the uniform distribution) with circular obstacles that the UAV has to navigate. The trajectory traversed by the UAV following the described planning algorithm is indicated in these plots and remains collision free in each case. Note that the original (non-Taylor expanded) dynamics are used for the simulations.

We end the discussion of this example by comparing running times and performance of the SDSOS and SOS approaches to this problem. In order to do this, we fix the initial state of the vehicle to be $(0,0,\psi_0)$ for varying values of $\psi_0$. For each $\psi_0$, we randomly sample $100$ different environments containing two obstacles each. The obstacles are disks of radius $0.03$ m with centers $(x_c,y_c)$ uniformly sampled in the range $x_c \in [-0.2,0.2]$ m, $y_c \in [0,0.2]$ m. For each environment, we attempt to find a valid barrier certificate for the first controller in our library (i.e., the one that servos the vehicle to $\psi_{des,1} = 0$). The results are summarized in Table \ref{tab:comparison_barrier} which presents the number of environments (out of 100) for which a barrier certificate was successfully found using SDSOS and SOS programming. As the table illustrates, the number of times SDSOS programming \emph{fails} to find a barrier certificate when SOS programming \emph{succeeds} is quite small.

\begin{table}
\small
\begin{center}
  \begin{tabular}{ | c | c | c | c | c | c |  }
    \hline 
    $\psi_0$ & $0^{\circ}$ & $10^{\circ}$ & $20^{\circ}$ & $30^{\circ}$ & $40^{\circ}$ \\ \hline
    SDSOS & 66 $\%$  & 59 $\%$ & 70 $\%$  & 68 $\%$ & 56 $\%$  \\ \hline
    SOS & 68 $\%$ & 62 $\%$  & 70 $\%$  & 76 $\%$  & 65 $\%$  \\ \hline
  \end{tabular}
    \caption{Comparison of percentage of times a valid barrier certificate was found using SDSOS and SOS programming for randomly sampled obstacle environments and initial yaw angles. (Only the ratio between the two is meaningful here.) \label{tab:comparison_barrier}}
  \end{center}
  \vspace{-10pt}
  \end{table}

We also compare running times of the two approaches in Figure \ref{fig:comparison_barrier_runtimes}. We use the Gurobi SOCP solver  \cite{gurobi}  for the SDSOS problems and SeDuMi \cite{sedumi} as the SDP solver for SOS problems. As the histograms of running times illustrate, the SDSOS approach is significantly faster than the SOS approach. We note that while the MOSEK SOCP/SDP solvers are typically faster, we were unable to make these work on this problem due to numerical issues.

\begin{figure}[H]
\centering
\includegraphics[trim = 0mm 0mm 0mm 0mm, clip, width=.4\textwidth]{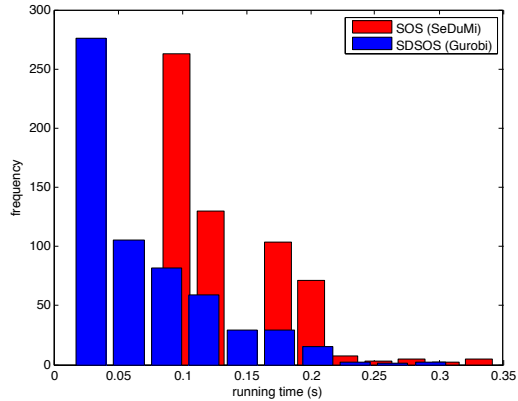}
\caption{Histograms of running times for SOS and SDSOS approaches on the collision avoidance problem.}
\label{fig:comparison_barrier_runtimes}
\end{figure}

\section{Nonlinear Control Design for a Quadrotor Model}
\label{sec:quadrotor}

Quadrotors (see Figure \ref{fig:quad_pic}) have recently been recognized as a popular platform for academic research in systems theory due to their agile maneuvering capabilities and inexpensive cost \cite{Mellinger11a,Hoffmann07}. They have also been considered for the task of load transportation, not only in laboratory settings \cite{Mellinger10c}\footnote{A video corresponding to the paper is available at \href{https://www.youtube.com/watch?v=YBsJwapanWI}{https://www.youtube.com/watch?v=YBsJwapanWI}}, but also by the aerospace companies Bell and Boeing and the online retail company Amazon\footnote{\href{https://www.youtube.com/watch?v=Le46ERPMlWU}{https://www.youtube.com/watch?v=Le46ERPMlWU}}. In this section, we consider the problem of designing a nonlinear stabilizing feedback controller for the quadrotor's hovering configuration, which is relevant to almost all of its applications. In addition to a stabilizing controller, we also obtain a \emph{formal certificate} of stability of the resulting system. This certificate takes the form of an inner approximation of the region of attraction (ROA), i.e., the set of initial conditions the controller is guaranteed to stabilize to the goal position.

\begin{figure}[H]
\centering
\includegraphics[trim = 0mm 0mm 0mm 0mm, clip, width=.3\textwidth]{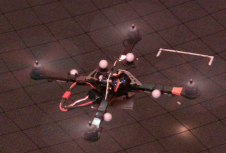}
\caption{We design a hovering controller for the quadrotor model described in \cite{Mellinger10}. (Image from \cite{Mellinger10}.)}
\label{fig:quad_pic}
\end{figure}

We use the dynamics model described in \cite{Mellinger10} for our numerical experiments. The model includes 16 states:
$$x\mathrel{\mathop:}=[x_1,y,z,\phi,\theta,\psi,\dot{x},\dot{y},\dot{z},p,q,r,\omega_1,\omega_2,\omega_3,\omega_4],$$
where $x_1,y,z$ are the coordinates of the center of mass of the system, $\phi,\theta,\psi$ are the Euler angles describing its orientation, $p,q,r$ are angular velocities of the quadrotor expressed in the body frame, and $\omega_i, i = 1,...,4$, are the angular speed of the rotors. The rotor angular speeds cannot be controlled directly and have nontrivial dynamics. The control inputs of the system are thus the \emph{desired} speed of the rotors (the rotors take some time to catch up to the desired speed).

In the end our system takes the form $\dot{x}=f(x)+g(x)u(x)$ with $f$ and $g$ given and the control $u$ as a decision function. We use the method presented in our earlier work \cite{Majumdar13} in collaboration with Russ Tedrake to design a hovering controller $u$ for the system. The fixed point corresponding to the hovering configuration has the first twelve states of the system equaling $0$ but with non-zero rotor speeds $\omega_i$ counteracting the force of gravity. The dynamics of the system are Taylor expanded to degree $3$ in order to obtain polynomial dynamics. We search for a degree $2$ Lyapunov function $V(x)$ and a degree $3$ feedback controller $u(x)$ in order to maximize the size of the region of attraction (ROA) of the resulting closed-loop system (i.e., the differential equation with $u(x)$ plugged in). We use SDSOS programming since the state space is too large for SOS programming to handle, causing our computer to run out of memory. The resulting optimization problem is:
\begin{flalign} \label{eq:control_design}
  \mathop{\textrm{max}}_{\rho,L(x),V(x),u(x)} \quad & \rho \\ 
  \textrm{s.t.} \quad &   V(x)  \in SDSOS_{16,2} \nonumber \\ 
   \quad  -&\dot{V}(x) + L(x)(V(x) - \rho) \in SDSOS_{16,6} \nonumber \\
   \quad & L(x)  \in SDSOS_{16,4}  \nonumber \\ 
   \quad & \sum_j V(e_j) = 1.  \nonumber
   \end{flalign} 
Here, $L(x)$ is a nonnegative multiplier term and $e_j$ is the $j$-th standard basis vector for the state space $\mathbb{R}^{16}$. From our discussion in Section~\ref{sec:Lyap.background}, it is easy to see that the above conditions are sufficient for establishing $B_\rho = \{x \in \mathbb{R}^{16} \ | \ V(x) \leq \rho\}$ as an inner estimate of the region of attraction for the system. When $x \in B_\rho$,  the second constraint implies that $\dot{V}(x) < 0$ (since $L(x)$ is constrained to be nonnegative).  The last constraint normalizes $V(x)$ so that maximizing the level set value $\rho$ leads to enlarging the volume of the ROA. %Note that this normalization does not introduce conservativeness since if a valid Lyapunov function exists, one can always scale it to satisfy this normalization constraint.

The optimization problem \eqref{eq:control_design} is not convex in general since it involves conditions that are bilinear in the decision variables. However, problems of this nature are common in the SOS programming literature (see e.g. \cite{Jarvis-Wloszek03}) and are typically solved by iteratively optimizing groups of decision variables.  Each step in the iteration is then a SDSOS program. This iterative procedure is described in more detail in \cite{Majumdar13} and can be initialized with the Lyapunov function from a Linear Quadratic Regulator (LQR) controller~\cite{Anderson90}. The iterations are terminated when the objective changes by less than $1$ percent.
% and a small enough value of $\rho$. 

\begin{figure}[H]
\centering
\subfigure[x -- y subspace]{%
\includegraphics[trim = 20mm 70mm 25mm 70mm, clip, width=.25\textwidth]{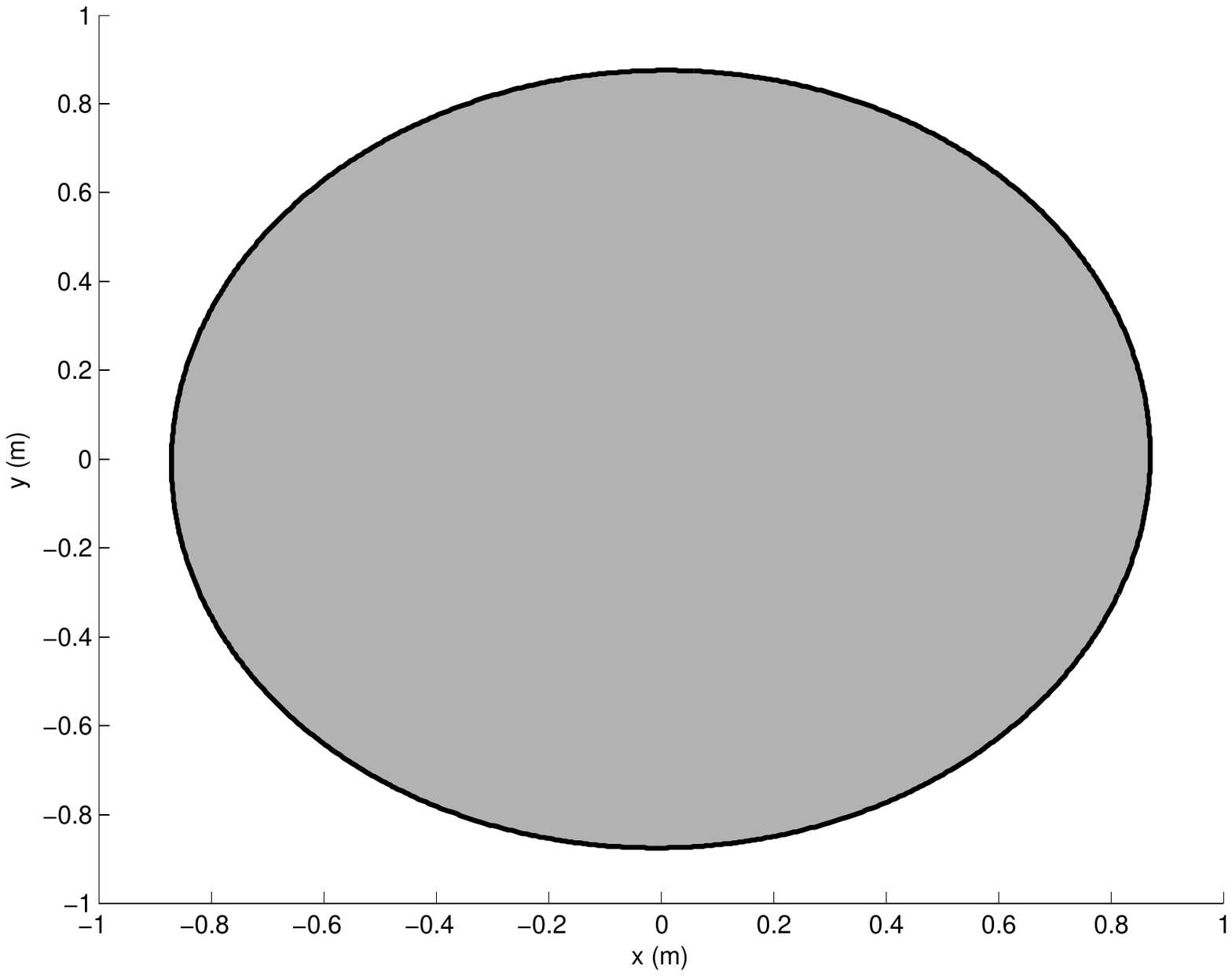}
\label{fig:x_vs_y}}
\hfill
\subfigure[x -- z subspace]{%
\includegraphics[trim = 20mm 70mm 25mm 70mm, clip, width=.25\textwidth]{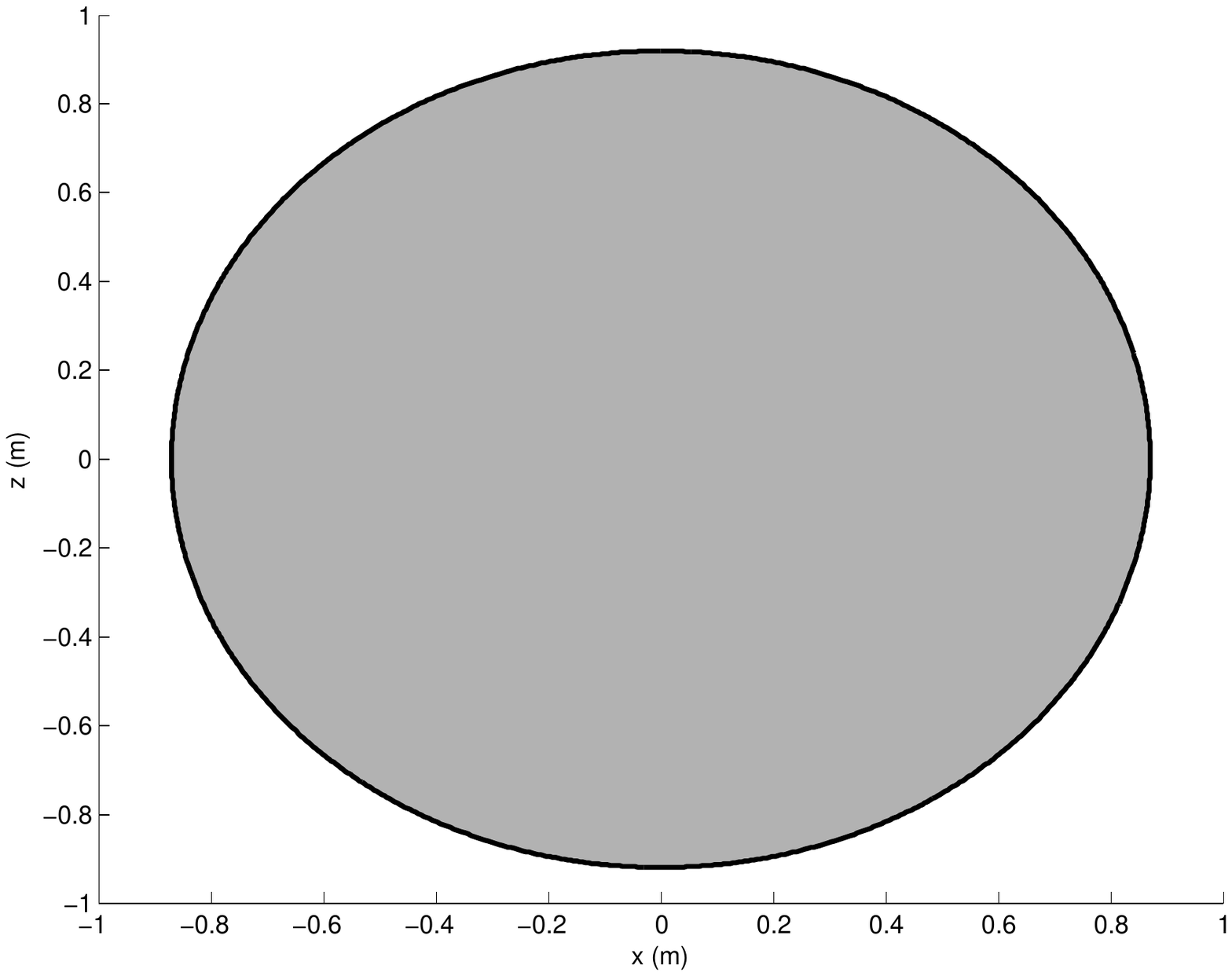}
\label{fig:x_vs_z}}
\hfill
%\subfigure[x -- roll subspace]{%
%\includegraphics[trim = 20mm 70mm 25mm 70mm, clip, width=.3\textwidth]{Figures/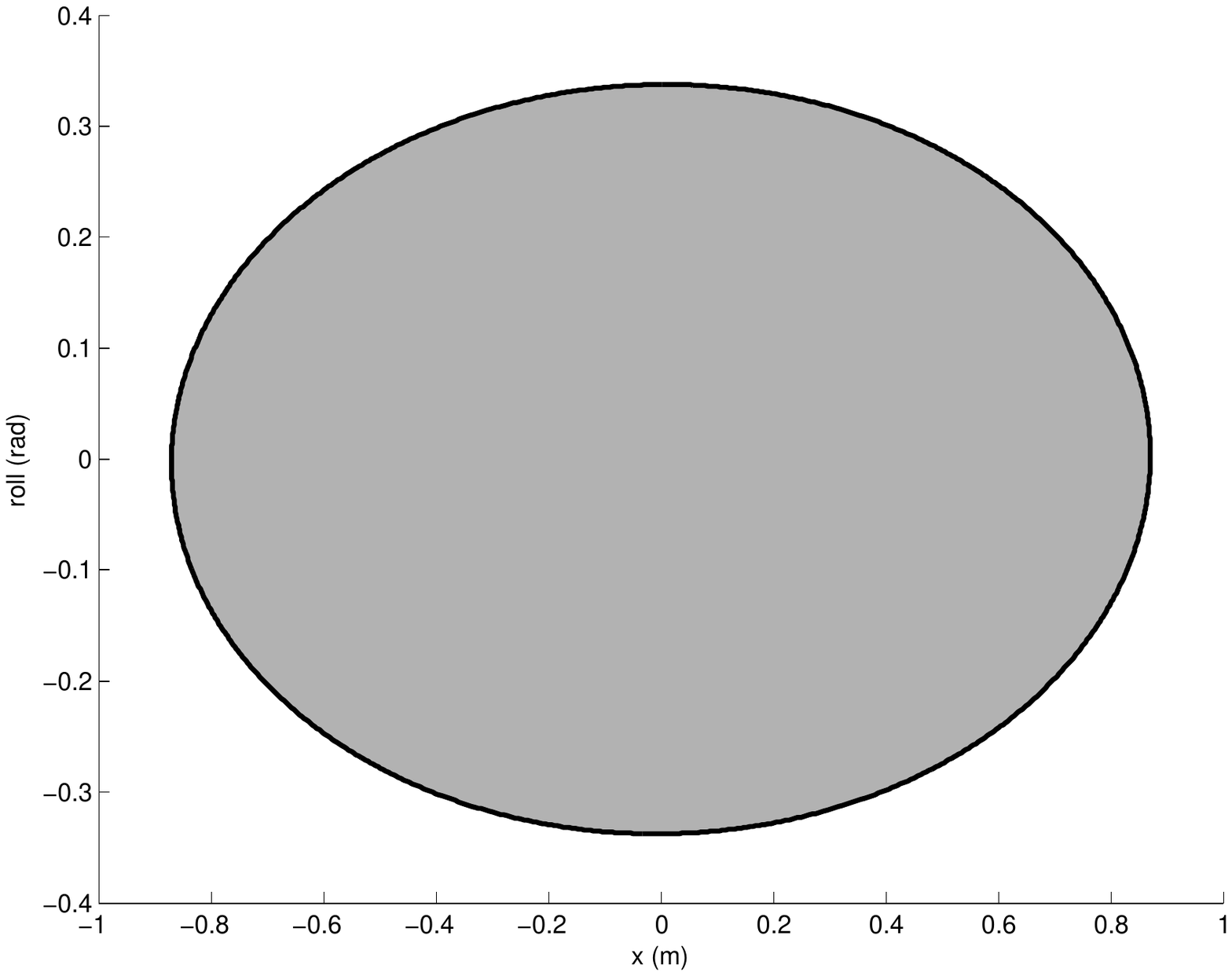}
%\label{fig:x_vs_roll}}
%\hfill
\subfigure[x -- pitch subspace]{%
\includegraphics[trim = 20mm 70mm 25mm 70mm, clip, width=.25\textwidth]{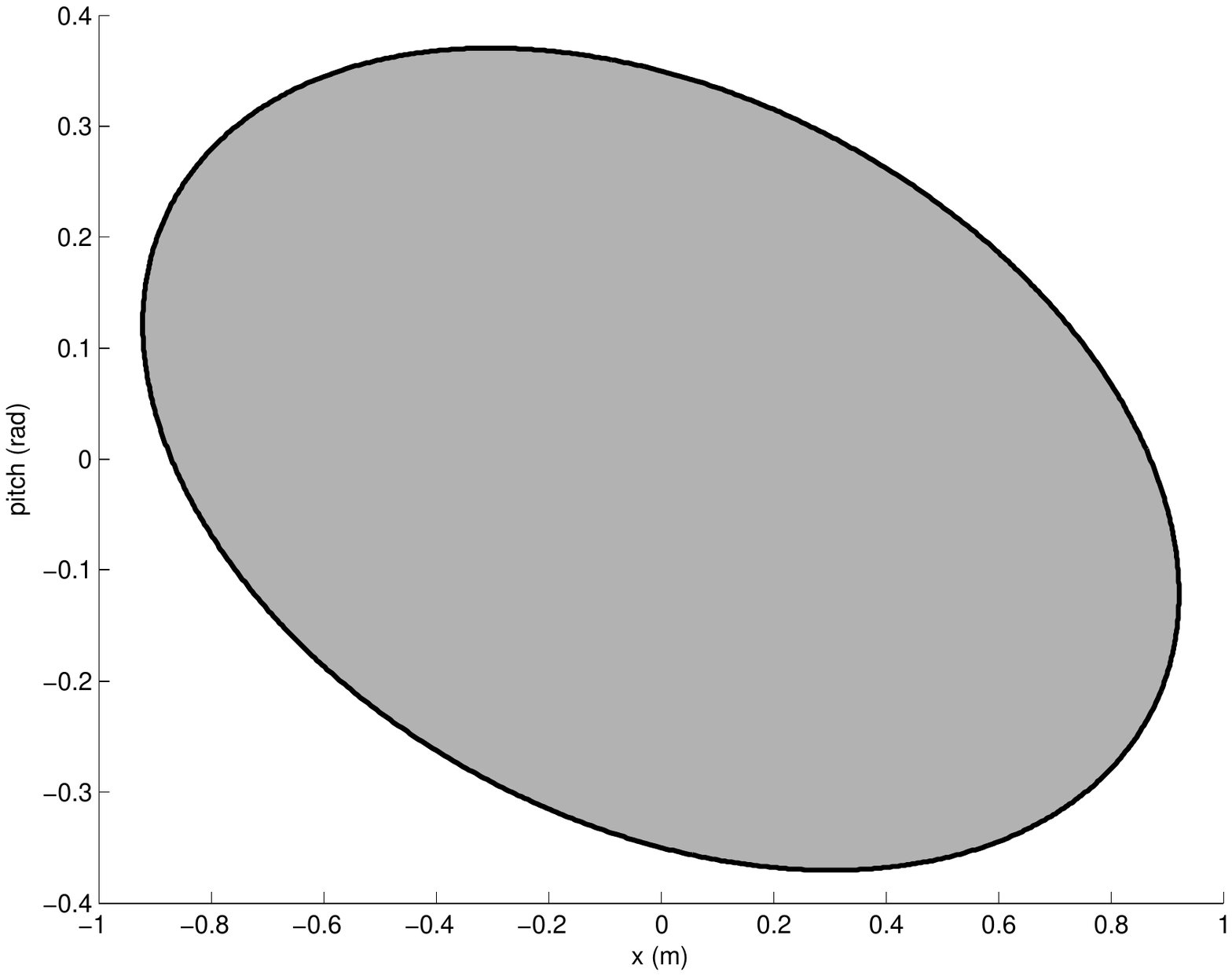}
\label{fig:x_vs_pitch}}
\caption{Slices in different subspaces of the hovering ROA of the quadrotor system.}
\label{fig:quad_roa}
\end{figure}

An important observation is that unlike the sets $POS_{n,d}$ and $SOS_{n,d}$, the sets $DSOS_{n,d}$ and $SDSOS_{n,d}$ are not invariant to coordinate transformations, i.e., a polynomial $p(Ax)$ is not necessarily dsos (resp. sdsos) even if $p(x)$ is dsos (resp. sdsos). Thus, performing coordinate transformations on the problem data (e.g., on the state variables of a dynamical system) can sometimes have an important effect. We describe a particular coordinate transformation that is intuitive and straightforward to implement. It can be used for problems involving the search for Lyapunov functions, and can potentially be extended to other problems as well. In particular, given a Lyapunov function $V(x)$ we find an invertible affine transformation that simultaneously diagonalizes the Hessians of $V(x)$ and $-\dot{V}(x)$ evaluated at the origin (this is always possible for two positive definite matrices). The intuition behind the coordinate change is that the functions $V(x)$ and $-\dot{V}(x)$ locally resemble functions of the form $x^TDx$ (with $D$ diagonal), which are dsos polynomials that are ``far away'' from the boundary of the DSOS (and hence SDSOS) cone. We solve the optimization problem \eqref{eq:control_design} after performing this coordinate transformation. The transformation is then inverted to obtain ROAs in the original coordinate frame. % We use the Lyapunov function corresponding to a LQR controller to perform this coordinate transformation.

\begin{figure}[H]
\centering
\includegraphics[trim = 220mm 5mm 230mm 90mm, clip, width=.3\textwidth]{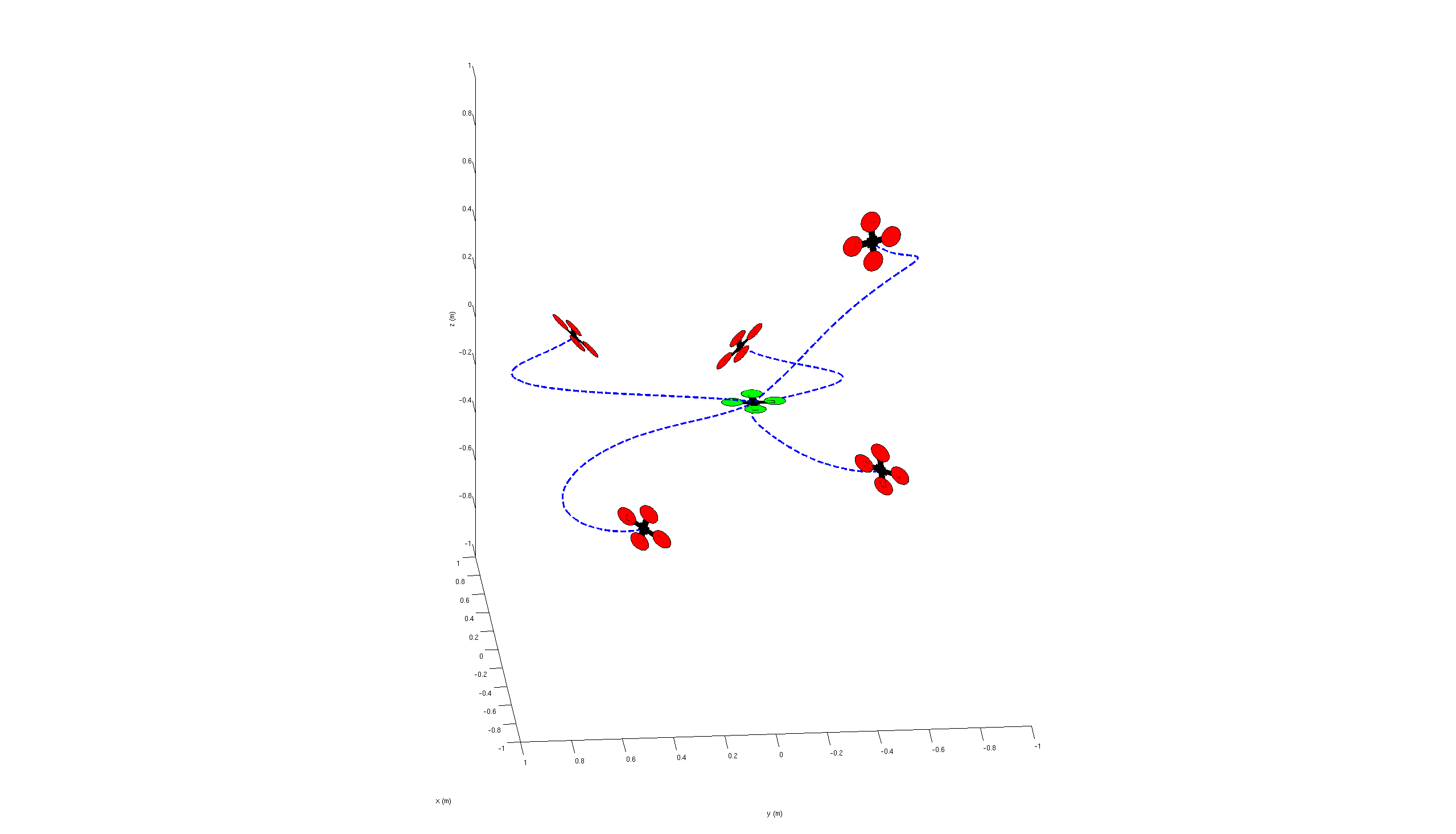}
\caption{A sampling of five initial conditions that are stabilized by our controller. The goal position is shown in green, the stabilized initial conditions in red, and the intermediate trajectories in blue.}
\label{fig:quad_snaps}
\vspace{-10pt}
\end{figure}

Each iteration of the algorithm employed for solving the optimization problem \eqref{eq:control_design} takes approximately 15 minutes, with convergence occurring between 15 and 20 iterations. Figure \ref{fig:quad_roa} shows slices of the computed ROA in multiple subspaces of the state space. As the plot illustrates, we are able to verify stability of the closed loop system for a large set of initial conditions. A qualitative demonstration of the performance of the controller is given in Figure \ref{fig:quad_snaps}. The system is started off from five different initial conditions (shown in red) and our nonlinear hovering controller is applied. The resulting trajectory is shown in blue. In each case the quadrotor is able to stabilize itself to the goal configuration (green).

\section{Conclusions}\label{sec:conclusions}
In this paper, we demonstrated three applications of optimization problems over the set of nonnegative polynomials that may be of interest in operations research and transportation engineering. We hope to have conveyed the message that the problem of certifying polynomial inequalities appears in more diverse areas than one might think. There are powerful tools for approaching this problem based on the sum of squares relaxation and semidefinite programming. We believe that our recently introduced techniques of DSOS and SDSOS optimization, which are LP and SOCP-based alternatives to sum of squares programming, can pave the way to new applications of algebraic techniques in optimization---in particular, applications that are large-scale or real-time.

\small{
\section{Acknowledgements}
The authors would like to thank Pablo Parrilo, Russ Tedrake, and the MIT Robot Locomotion Group for many helpful discussions that have contributed greatly to this paper. The authors would also like to acknowledge the use of the software package Drake (\href{https://github.com/RobotLocomotion/drake/wiki}{https://github.com/RobotLocomotion/drake/wiki}) developed by the Robot Locomotion Group for formulating the dynamics in the quadrotor example, along with the SPOTless software developed by Mark Tobenkin, Frank Permenter and Alexandre Megretski for processing the SOS programs in our examples. Finally, we are very grateful for receiving constructive criticism from a referee that led to improvements in this paper.
}

%applications that start to move towards 5large-scale or real-time.

%\newpage
\begin{multicols}{2}
%\small{
\bibliographystyle{abbrv}
% \bibliography{elib}
\bibliography{pablo_amirali,elib}

\def\cprime{$'$}
\begin{thebibliography}{10}

\bibitem{gurobi}
Gurobi optimizer reference manual.
\newblock {\em URL: http://www. gurobi. com}, 2012.

\bibitem{mosek}
{\em {MOSEK} reference manual}, 2013.
\newblock Version 7. Latest version available at \url{http://www.mosek.com/}.

\bibitem{AAA_PhD}
A.~A. Ahmadi.
\newblock {\em Algebraic relaxations and hardness results in polynomial
  optimization and {L}yapunov analysis}.
\newblock PhD thesis, Massachusetts Institute of Technology, September 2011.
\newblock Available at \texttt{http://aaa.princeton.edu/publications}.

\bibitem{dsos_ciss14}
A.~A. Ahmadi and A.~Majumdar.
\newblock {DSOS} and {SDSOS} optimization: {LP} and {SOCP}-based alternatives
  to sum of squares optimization.
\newblock In {\em Proceedings of the 48th Annual Conference on Information
  Sciences and Systems}. Princeton University, 2014.

\bibitem{Ahmadi14}
A.~A. Ahmadi and A.~Majumdar.
\newblock {DSOS and SDSOS optimization: More tractable alternatives to SOS
  optimization}.
\newblock {\em In preparation (http://aaa.princeton.edu/publications)}, 2014.

\bibitem{Alizadeh03}
F.~Alizadeh and D.~Goldfarb.
\newblock Second-order cone programming.
\newblock {\em Mathematical programming}, 95(1):3--51, 2003.

\bibitem{Anderson90}
B.~D.~O. Anderson and J.~B. Moore.
\newblock {\em Optimal control: linear quadratic methods}.
\newblock Prentice-Hall, Inc., Upper Saddle River, NJ, USA, 1990.

\bibitem{barry2012safety}
A.~J. Barry, A.~Majumdar, and R.~Tedrake.
\newblock Safety verification of reactive controllers for {UAV} flight in
  cluttered environments using barrier certificates.
\newblock In {\em Proceedings of the IEEE International Conference on Robotics
  and Automation}, pages 484--490. IEEE, 2012.

\bibitem{berman1995location}
O.~Berman, P.~Jaillet, and D.~Simchi-Levi.
\newblock Location-routing problems with uncertainty.
\newblock {\em Facility location: a survey of applications and methods},
  106:427--452, 1995.

\bibitem{bertsekas1995dynamic}
D.~P. Bertsekas.
\newblock {\em Dynamic programming and optimal control}, volume~1.
\newblock Athena Scientific Belmont, MA, 1995.

\bibitem{bertsimas1997introduction}
D.~Bertsimas and J.~N. Tsitsiklis.
\newblock Introduction to linear optimization.
\newblock 1997.

\bibitem{Mars_SOCP}
L.~Blackmore, B.~Acikmese, and D.~P. Scharf.
\newblock Minimum landing error powered descent guidance for {M}ars landing
  using convex optimization.
\newblock {\em AIAA Journal of Guidance, Control, and Dynamics}, 33, 2010.

\bibitem{Boman05}
E.~G. Boman, D.~Chen, O.~Parekh, and S.~Toledo.
\newblock On factor width and symmetric h-matrices.
\newblock {\em Linear algebra and its applications}, 405:239--248, 2005.

\bibitem{Jamming_phd}
C.~W. Commander.
\newblock {\em Optimization problems in telecommunications with military
  applications}.
\newblock PhD thesis, University of Florida, 2007.

\bibitem{Jamming_journal}
C.~W. Commander, P.~M. Pardalos, V.~Ryabchenko, O.~Shylo, S.~Uryasev, and
  G.~Zrazhevsky.
\newblock Jamming communication networks under complete uncertainty.
\newblock {\em Optimization Letters}, 2(1):53--70, 2008.

\bibitem{commander2007wireless}
C.~W. Commander, P.~M. Pardalos, V.~Ryabchenko, S.~Uryasev, and G.~Zrazhevsky.
\newblock The wireless network jamming problem.
\newblock {\em Journal of Combinatorial Optimization}, 14(4):481--498, 2007.

\bibitem{ecos}
A.~Domahidi, E.~Chu, and S.~Boyd.
\newblock {ECOS}: An {SOCP} solver for embedded systems.
\newblock In {\em European Control Conference (ECC)}, pages 3071--3076. IEEE,
  2013.

\bibitem{Dubins57}
L.~Dubins.
\newblock On curves of minimal length with a constraint on average curvature,
  and with prescribed initial and terminal positions and tangents.
\newblock {\em American Journal of Mathematics}, 79(3):497--516, 1957.

\bibitem{Hilbert_1888}
D.~Hilbert.
\newblock \"{U}ber die {D}arstellung {D}efiniter {F}ormen als {S}umme von
  {F}ormenquadraten.
\newblock {\em Math. Ann.}, 32, 1888.

\bibitem{Hoffmann07}
G.~M. Hoffmann, H.~Huang, S.~L. Waslander, and C.~J. Tomlin.
\newblock Quadrotor helicopter flight dynamics and control: Theory and
  experiment.
\newblock In {\em Proceedings of the AIAA Guidance, Navigation, and Control
  Conference}, pages 1--20, 2007.

\bibitem{Jarvis-Wloszek03}
Z.~Jarvis-Wloszek, R.~Feeley, W.~Tan, K.~Sun, and A.~Packard.
\newblock Some controls applications of sum of squares programming.
\newblock In {\em 42nd IEEE Conference on Decision and Control}, volume~5,
  pages 4676 -- 4681, December 2003.

\bibitem{Karlof05}
J.~K. Karlof.
\newblock {\em Integer programming: theory and practice}.
\newblock CRC Press, 2005.

\bibitem{Khalil:3rd.Ed}
H.~Khalil.
\newblock {\em Nonlinear systems}.
\newblock Prentice Hall, 2002.
\newblock Third edition.

\bibitem{Laurent_survey}
M.~Laurent.
\newblock Sums of squares, moment matrices and optimization over polynomials.
\newblock In {\em Emerging applications of algebraic geometry}, pages 157--270.
  Springer, 2009.

\bibitem{Majumdar13}
A.~Majumdar, A.~A. Ahmadi, and R.~Tedrake.
\newblock Control design along trajectories with sums of squares programming.
\newblock In {\em Proceedings of the 2013 IEEE International Conference on
  Robotics and Automation (ICRA)}, 2013.

\bibitem{dsos_cdc14}
A.~Majumdar, A.~A. Ahmadi, and R.~Tedrake.
\newblock Control and verification of high-dimensional systems via {DSOS} and
  {SDSOS} optimization.
\newblock In {\em Proceedings of the 53$^{rd}$ IEEE Conference on Decision and
  Control}, 2014.

\bibitem{majumdar2013robust}
A.~Majumdar and R.~Tedrake.
\newblock Robust online motion planning with regions of finite time invariance.
\newblock In {\em Algorithmic Foundations of Robotics X}, pages 543--558.
  Springer, 2013.

\bibitem{boyd_real_time_convex_signal_processing}
J.~Mattingley and S.~Boyd.
\newblock Real-time convex optimization in signal processing.
\newblock {\em Signal Processing Magazine, IEEE}, 27(3):50--61, 2010.

\bibitem{cvxgen}
J.~Mattingley and S.~Boyd.
\newblock {CVXGEN}: a code generator for embedded convex optimization.
\newblock {\em Optimization and Engineering}, 13(1):1--27, 2012.

\bibitem{Mellinger10}
D.~Mellinger, N.~Michael, and V.~Kumar.
\newblock Trajectory generation and control for precise aggressive maneuvers
  with quadrotors.
\newblock In {\em Proceedings of the 12th International Symposium on
  Experimental Robotics (ISER 2010)}, 2010.

\bibitem{Mellinger11a}
D.~Mellinger, N.~Michael, M.~Shomin, and V.~Kumar.
\newblock Recent advances in quadrotor capabilities.
\newblock {\em 2011 IEEE International Conference on Robotics and Automation},
  May 2011.

\bibitem{Mellinger10c}
D.~Mellinger, M.~Shomin, N.~Michael, and V.~Kumar.
\newblock Cooperative grasping and transport using multiple quadrotors.
\newblock In {\em Proceedings of the international symposium on distributed
  autonomous robotic systems}, 2010.

\bibitem{nonnegativity_NP_hard}
K.~G. Murty and S.~N. Kabadi.
\newblock Some {NP}-complete problems in quadratic and nonlinear programming.
\newblock {\em Mathematical Programming}, 39:117--129, 1987.

\bibitem{nie2007Putinarcomplexity}
J.~Nie and M.~Schweighofer.
\newblock On the complexity of {P}utinar's {P}ositivstellensatz.
\newblock {\em Journal of Complexity}, 23(1):135--150, 2007.

\bibitem{PhD:Parrilo}
P.~A. Parrilo.
\newblock {\em Structured semidefinite programs and semialgebraic geometry
  methods in robustness and optimization}.
\newblock PhD thesis, California Institute of Technology, May 2000.

\bibitem{sdprelax}
P.~A. Parrilo.
\newblock Semidefinite programming relaxations for semialgebraic problems.
\newblock {\em Mathematical Programming}, 96(2, Ser. B):293--320, 2003.

\bibitem{S_lemma_survey}
I.~P\'olik and T.~Terlaky.
\newblock A survey of the {S}-lemma.
\newblock {\em SIAM Review}, 49(3):371--418, 2007.

\bibitem{barrier_certificates_PrajnaJadbabaie}
S.~Prajna and A.~Jadbabaie.
\newblock Safety verification of hybrid systems using barrier certificates.
\newblock In {\em Hybrid Systems: Computation and Control}, pages 477--492.
  Springer, 2004.

\bibitem{barrier_certificates_TAC_PrajnaJadbabaie}
S.~Prajna, A.~Jadbabaie, and G.~J. Pappas.
\newblock A framework for worst-case and stochastic safety verification using
  barrier certificates.
\newblock {\em IEEE Transactions on Automatic Control}, 52(8):1415--1428, 2007.

\bibitem{putinar1993positive}
M.~Putinar.
\newblock Positive polynomials on compact semi-algebraic sets.
\newblock {\em Indiana University Mathematics Journal}, 42(3):969--984, 1993.

\bibitem{stengle1974nullstellensatz}
G.~Stengle.
\newblock A {N}ullstellensatz and a {P}ositivstellensatz in semialgebraic
  geometry.
\newblock {\em Mathematische Annalen}, 207(2):87--97, 1974.

\bibitem{sedumi}
J.~Sturm.
\newblock {\em {SeDuMi} version 1.05}, Oct. 2001.
\newblock Latest version available at \texttt{http://sedumi.ie.lehigh.edu/}.

\bibitem{VaB:96}
L.~Vandenberghe and S.~Boyd.
\newblock Semidefinite programming.
\newblock {\em SIAM Review}, 38(1):49--95, Mar. 1996.

\end{thebibliography}
% \bibliography{pablo_amirali}
%}
\end{multicols}

%%%%%%%%%%%%%%%%%%%%%%%%%%%%%%%%%%%%%%%%%%%%%%%%%%%%%%%%%%%%%%%%%%%%%%%%%%%%%%%

%%%%%%%%%%%%%%%%%%%%%%%%%%%%%%%%%%%%%%%%%%%%%%%%%%%%%%%%%%%%%%%%%%%%%%%%%%%%%%%
%\bibliographystyle{unsrt}

\end{document}